\tikzstyle arrowstyle=[scale=1]
\tikzstyle directed=[postaction={decorate,decoration={markings, mark=at position 0.75 with {\arrow[arrowstyle]{stealth}}}}]
\tikzstyle redirected=[postaction={decorate,decoration={markings, mark=at position 0.35 with {\arrow[arrowstyle]{stealth}}}}]
\newtheorem{theorem}{Theorem}[section]
\newtheorem{corollary}[theorem]{Corollary}
\newtheorem{definition}[theorem]{Definition}
\newtheorem{lemma}[theorem]{Lemma}
\newtheorem{proposition}[theorem]{Proposition}
\DeclareMathOperator{\supp}{{\rm supp}}
\DeclareMathOperator{\Z}{\mathbb{Z}}
\newcommand{\JCTB}{{\it J. Combin. Theory Ser. B}}
\newcommand{\JGT}{{\it J. Graph Theory}}
\newcommand{\DM}{{\it Discrete Math.}}
\newcommand{\SIAMDM}{{\it SIAM J. Discrete Math.}}
\newcommand{\CJM}{{\it Canad. J. Math.}}
\newcommand{\PLMS}{{\it Proc. London Math. Soc.}}
\newcommand{\EJC}{{\it European J. Combin.}}
\begin{document}

\title{\textbf{An 8-flow theorem for signed graphs}}

\author{
Rong Luo${}^{{}^1}$, Edita M\'a\v cajov\'a${}^{{}^2}$, 
Martin \v Skoviera${}^{{}^2}$, Cun-Quan Zhang${}^{{}^1}$
\\[4mm]
\small ${}^1$Department of Mathematics, West Virginia University,\\
Morgantown, WV 26506, USA.\\
\small \texttt{\{rluo,cqzhang\}@mail.wvu.edu}
\\[4mm]
\small ${}^2$Department of Computer Science, Comenius
University\\[-0.8ex]
\small Mlynsk\'a dolina, 842 48 Bratislava, Slovakia\\
\small \texttt{\{macajova,skoviera\}@dcs.fmph.uniba.sk}
 }

\date{\today}

\maketitle

\begin{abstract}
We prove that a signed graph admits a nowhere-zero $8$-flow
provided that it is flow-admissible and the underlying graph
admits a nowhere-zero $4$-flow. When combined with the 4-color
theorem, this implies that every flow-admissible bridgeless
planar signed graph admits a nowhere-zero $8$-flow. Our result
improves and generalizes previous results of Li et al.
(European J. Combin. 108 (2023), 103627), which state that
every flow-admissible signed $3$-edge-colorable cubic graph
admits a nowhere-zero $10$-flow, and that every flow-admissible
signed hamiltonian graph admits a nowhere-zero $8$-flow.
\end{abstract}

\section{Introduction}
The study of nowhere-zero flows on graphs began with an
observation made by Tutte in \cite{Tutte1949} that, for planar
graphs, nowhere-zero integer $k$-flows are dual to vertex
$k$-colorings. As demonstrated by Bouchet \cite{Bouchet1983},
Tutte's theory naturally extends to signed graphs. For them,
the concept of an integer flow naturally comes from the surface
duality for graphs embedded in nonorientable surfaces.

In 1983, Bouchet~\cite{Bouchet1983} proposed a conjecture that
every flow-admissible signed graph admits a nowhere-zero
$6$-flow. He also showed that the conjecture is true with $216$
instead of $6$, and Z\'yka~\cite{Zyka1987} further reduced
$216$ to $30$. Recently, DeVos et al. \cite{DLLLZZ} proved that
every flow-admissible signed graph admits a nowhere-zero
$11$-flow.

Integer flows have also been examined for various specific
families of signed graphs, such as planar, cubic, eulerian,
hamiltonian, or series-parallel signed graphs. For more
information we refer the reader to  \cite{HLLSZ2021, LLLZZ,
LLLZ-JGT2023, LLSSZ2018, LLZh-EJC2018, Kaiser2016,
Macajova2016JGT, Mac-skov2015, MS-eulerian, Raspaud2011,
Schubert2015, WLZZ, Wu2014}.

Our point of departure is the following theorem recently proved
by Li et al. \cite{LLLZZ}.

\begin{theorem}{\rm (Li et al. \cite{LLLZZ})}
\label{Th: cubic} Let $(G, \sigma)$ be a flow-admissible signed
graph.
\begin{itemize}
\item[\rm (1)] If $G$ is cubic and $3$-edge-colorable, then
    $(G,\sigma)$ admits a nowhere-zero $10$-flow.
\item[\rm (2)] If $G$ is planar and bridgeless, then
    $(G,\sigma)$ admits a nowhere-zero $10$-flow.
\item[\rm (3)] If $G$ is hamiltonian,  then $(G,\sigma)$
    admits a nowhere-zero $8$-flow.
\end{itemize}
\end{theorem}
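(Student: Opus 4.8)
The plan is to deduce all three statements from a single core assertion: \emph{if $(G,\sigma)$ is flow-admissible and the underlying graph $G$ admits a nowhere-zero $4$-flow, then $(G,\sigma)$ admits a nowhere-zero $8$-flow} (which is already stronger than the bound $10$ asserted in (1) and (2)). First I would verify that each hypothesis supplies a nowhere-zero $4$-flow of $G$. In (1), a proper $3$-edge-colouring of a cubic graph, with colours read as the three nonzero elements of $\mathbb{Z}_2\times\mathbb{Z}_2$, is literally a nowhere-zero $\mathbb{Z}_2\times\mathbb{Z}_2$-flow, hence a nowhere-zero $4$-flow; in (2) the four-colour theorem provides one; and in (3) a Hamilton cycle $H$ serves as one even subgraph $C_1=H$, while the chords form a subgraph $S$ whose odd-degree vertices are even in number and can therefore be paired by edge-disjoint arcs $T\subseteq H$, so that $C_2=S\cup T$ is even and $E(G)=C_1\cup C_2$. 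Thus in every case we obtain a decomposition of $E(G)$ into two even subgraphs, which is exactly a nowhere-zero $4$-flow.

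The engine is the classical equivalence between a nowhere-zero $4$-flow of $G$ and such a decomposition $E(G)=C_1\cup C_2$ into two even subgraphs, each a disjoint union of circuits. After switching $\sigma$ into a convenient normal form (switching changes neither flow-admissibility nor the $4$-flow of $G$), I would build two signed integer flows $f_1,f_2$ of $(G,\sigma)$, with $f_i$ carrying its flow essentially on $C_i$, and combine them. Each circuit of $C_i$ is either balanced or unbalanced in $(G,\sigma)$. On a balanced circuit one routes flow of value $1$ directly. The genuinely signed difficulty is the \emph{unbalanced} circuits: a single unbalanced circuit admits no nowhere-zero flow, so these must be paired into \emph{barbells} — two unbalanced circuits joined by a connecting path — and such a barbell carries value $\pm1$ on its two circuits and $\pm2$ on the connecting path. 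Flow-admissibility is precisely what guarantees that the unbalanced circuits can be paired and joined, since it forces every edge to lie in a balanced circuit or in a barbell. Consequently one can arrange $|f_i|\le 2$ for each $i$.

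Combining as $f=f_1+3f_2$ gives a flow that is nowhere-zero: on $C_1\setminus C_2$ it equals $f_1\ne 0$, on $C_2\setminus C_1$ it equals $3f_2\ne0$, and on the overlap no cancellation can occur because $|3f_2|\ge 3>2\ge|f_1|$. Since $|f|\le 3\cdot 2+2=8$, this already yields a nowhere-zero $9$-flow essentially for free, and the real crux is to shave the bound to $8$, i.e. to force $|f|\le 7$. The main obstacle, where I expect the substance of the argument to lie, is threefold and entirely about the unbalanced part: pairing the unbalanced circuits consistently, routing their connecting paths so that the flows remain supported within (or very near) the two even subgraphs, and — decisively for the bound — arranging the two barbell systems so that the doubled edges of $C_1$ never coincide with those of $C_2$. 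If no edge carries $|f_1|=2$ and $|f_2|=2$ simultaneously, then $|f|\le\max(3\cdot2+1,\,3\cdot1+2)=7$, delivering the nowhere-zero $8$-flow. This is exactly where the extra structure in each case helps: in the hamiltonian case the Hamilton cycle furnishes a canonical, globally coherent system of connecting paths (all drawn from $H=C_1$), so the coincidence is avoided outright and the sharp bound $8$ follows cleanly, whereas in the cubic $3$-edge-colourable and planar cases the connecting paths are harder to coordinate, and it is the careful simultaneous choice of barbell paths that one would have to supply to reach $8$ rather than the weaker value.
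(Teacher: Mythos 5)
Your reduction of all three parts to the single core assertion ``flow-admissible $+$ nowhere-zero $4$-flow on $G$ $\Rightarrow$ nowhere-zero $8$-flow on $(G,\sigma)$'' is exactly the route the paper takes (this is Theorem~\ref{Th:4-8-flow} together with the corollary following it), and your verification that each of the three hypotheses supplies a nowhere-zero $4$-flow is correct. The genuine gap is in the engine itself, at the step you describe as ``flow-admissibility is precisely what guarantees that the unbalanced circuits can be paired and joined.'' This is false as stated. The parity of the number of unbalanced circuits in any circuit decomposition of $C_i$ equals the parity of the number of negative edges of $C_i$, and it can perfectly well be odd --- in particular $C_i$ may contain exactly \emph{one} unbalanced circuit. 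Flow-admissibility of $(G,\sigma)$ only puts every edge into some signed circuit of $G$; it does not produce a partner for that circuit inside $C_i$, and it does not even guarantee a second unbalanced circuit edge-disjoint from it anywhere in $G$ (this is the paper's Subcase 2.2.2, where $G-E(C_1)$ is balanced, so no barbell through $C_1$ avoiding $E(C_1)$ exists at all). Consequently even your intermediate $9$-flow is not established, and the defect hits all three parts, including the hamiltonian one: if the Hamilton cycle itself is unbalanced, it is a single unpairable unbalanced circuit, so your claim that case (3) ``follows cleanly'' collapses. This single-unbalanced-circuit situation is where the paper spends most of its effort, using Lemma~\ref{Le:contraction} to contract the balanced circuits, then either finding a long barbell to an unbalanced circuit outside the $2$-factor or invoking Lemma~\ref{HC-cover}, and re-expanding via Lemma~\ref{LE: flow-extension}; the flow produced there is a $4$-flow combined as $f_3+2f_4$, not your $f_1+3f_2$.

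Two further points. First, for non-cubic $G$ (needed for case (2)) the circuits in a decomposition of an even subgraph can meet in many vertices, and two unbalanced circuits sharing more than one vertex do not form a barbell at all; the paper sidesteps this by splitting every vertex of degree at least $4$ into an all-positive circuit while carrying the $\mathbb{Z}_2\times\mathbb{Z}_2$-flow through the splitting, and only then applying the cubic theorem (flow-admissibility surviving by Lemma~\ref{Le:contraction}). Second, the disjointness $E_{f_1=\pm2}\cap E_{f_2=\pm2}=\emptyset$, which you correctly identify as the crux of the bound $8$, is exactly what you do not supply: in the paper it is engineered by choosing colour classes with matching parities of negative edges and applying Lemma~\ref{TH: 2-to-3}, which confines the values $\pm2$ of each $3$-flow to the third colour class, together with the alternative combinations $3f_3+f_2$, $f_3+2f_4$ and $f_3+2f_6+4f_7$ in the remaining cases. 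So your outline reproduces the paper's framework, but the steps left open are precisely those that constitute the substance of the proof.
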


In this paper, we improve and generalize  Theorem~\ref{Th:
cubic} as follows.

\begin{theorem}
\label{Th:4-8-flow} Let $(G, \sigma)$ be a flow-admissible
signed graph.  If $G$ admits a nowhere-zero $4$-flow, then
$(G,\sigma)$ admits a nowhere-zero $8$-flow.
\end{theorem}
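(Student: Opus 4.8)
The plan is to reduce to the essential case and then build the $8$-flow by superposing a ``doubled'' ordinary flow with a small flow that repairs the conservation defect created by the negative edges. Since flows, flow-admissibility, and the property of the underlying graph admitting a nowhere-zero $4$-flow are all invariant under switching and under passing to components, I may assume that $G$ is connected. If $(G,\sigma)$ is balanced, a switching makes every edge positive, so $(G,\sigma)$ is flow-equivalent to the ordinary graph $G$, which already has a nowhere-zero $4$-flow and hence a nowhere-zero $8$-flow; thus I may assume $(G,\sigma)$ is unbalanced, and after switching I fix a set $E^-$ of negative edges.

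Next I would exploit the hypothesis. A nowhere-zero $4$-flow on $G$ is the same as a decomposition $E(G)=C_1\cup C_2$ into two even subgraphs (a nowhere-zero $\mathbb{Z}_2\times\mathbb{Z}_2$-flow); concretely, fix an integer nowhere-zero $4$-flow $\phi$ on the \emph{unsigned} graph $G$, so $\phi(e)\in\{\pm1,\pm2,\pm3\}$. Read on the signed graph, $\phi$ fails Kirchhoff's law only at the ends of negative edges: the signed boundary satisfies $\partial_\sigma\phi(v)=2c(v)$ for an integer function $c$ supported on the vertices incident to $E^-$. The target is a signed flow of the shape $f=\phi+2\psi$. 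This is a genuine flow on $(G,\sigma)$ exactly when $\partial_\sigma\psi(v)=-c(v)$ for all $v$; it is nowhere-zero provided $2\psi(e)\neq-\phi(e)$, which can only fail when $\phi(e)=\pm2$; and it is an $8$-flow as soon as $|\psi(e)|\le 2$, since then $|f(e)|\le 3+2\cdot 2=7$.

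Everything therefore reduces to producing a correcting function $\psi$ with prescribed boundary $-c$, with values bounded by $2$, and avoiding the finitely many ``cancelling'' values on the edges where $\phi=\pm2$. This is precisely where flow-admissibility enters: the defect $c$ is concentrated on the negative edges, and flow-admissibility (no balanced bridge cutting off a piece of the unbalanced part) guarantees that these can be paired up and joined by paths, so that $-c$ is the boundary of some integer flow supported on those paths together with $E^-$. The existence of \emph{some} such $\psi$ is then automatic.

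The hard part will be to realize $\psi$ with the small bound $|\psi|\le 2$: routing the pairing paths so that their contributions do not accumulate — controlling congestion at high-degree vertices and the overlap of the correction with the edges where $\phi=\pm2$ — is the delicate step. I expect this to force a joint choice of $\phi$ and $\psi$ rather than an arbitrary $4$-flow: namely, to pair up the \emph{unbalanced circuits} inside $C_1$ and $C_2$ and to push the repairing flow along the even-subgraph structure of $\phi$ itself, so that each edge is crossed by a bounded amount of correction. Once such a $\psi$ is in hand, checking that $f=\phi+2\psi$ is nowhere-zero and bounded by $7$ is the routine final step that yields the nowhere-zero $8$-flow.
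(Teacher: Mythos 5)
Your proposal sets up a sensible ansatz, $f=\phi+2\psi$ with $\phi$ an integer nowhere-zero $4$-flow on the unsigned graph, and your bookkeeping around it is correct (the conservation defect of $\phi$ on the signed graph is even and supported near negative edges; $f$ is nowhere-zero unless $2\psi(e)=-\phi(e)$, which requires $\phi(e)=\pm2$; and $|\psi|\le 2$ gives $|f|\le 7$). But the proof has two genuine gaps. First, the existence of \emph{any} $\psi$ with $\partial_\sigma\psi=-c$ is not ``automatic'': over a connected unbalanced signed graph the image of the signed boundary operator consists exactly of the vectors with \emph{even} coordinate sum, and $\sum_v c(v)\equiv\sum_{e\in E^-}\phi(e)\pmod 2$, so solvability fails whenever an odd number of negative edges carry odd $\phi$-values. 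This can be repaired --- one must choose the $4$-flow so that its odd-valued edge set (an even subgraph) meets $E^-$ in an even number of edges, which is possible by a pigeonhole among the three even subgraphs $C_1$, $C_2$, $C_1\triangle C_2$ associated with a $\mathbb{Z}_2\times\mathbb{Z}_2$-flow --- but you assert existence instead of proving it, and the needed parity selection is exactly the kind of argument the paper makes when it picks two color classes of equal negative-edge parity.

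Second, and decisively, the entire content of the theorem lies in the step you explicitly defer: constructing $\psi$ with the prescribed boundary, with $|\psi|\le 2$, and avoiding the cancelling values on edges where $\phi(e)=\pm 2$. Saying ``the hard part will be to realize $\psi$ with the small bound'' and ``I expect this to force a joint choice of $\phi$ and $\psi$'' is a restatement of the problem, not a solution; if bounded-value correcting flows with prescribed boundary were easy to route, the long line of results culminating in Bouchet's $6$-flow conjecture would be far simpler. The paper resolves this difficulty by an entirely different mechanism: it blows up every vertex of degree at least $4$ into a circuit while extending the nowhere-zero $\mathbb{Z}_2\times\mathbb{Z}_2$-flow, thereby reducing to a flow-admissible $3$-edge-colorable cubic signed graph (using the contraction lemma to preserve flow-admissibility in both directions), proves the cubic case by a parity case analysis on the three color-class $2$-factors (combining $2$-flows and $3$-flows as $f_1+3f_2$, $3f_3+f_2$, $f_3+2f_4$, $f_3+2f_6+4f_7$, with barbell and flow-extension lemmas handling the single-unbalanced-circuit subcase), and then contracts back. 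Your sketch contains no substitute for any of this machinery, so as it stands it is an unproved strategy rather than a proof.
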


It is well known that $3$-edge-colorable cubic graphs,
hamiltonian graphs, and (by the $4$-color theorem) bridgeless
planar graphs all admit a nowhere-zero $4$-flow. Thus, we have
the following corollary.

\begin{corollary}
Let $(G,\sigma)$ be a flow-admissible signed graph. Then
$(G,\sigma)$ admits a nowhere-zero $8$-flow if $G$ satisfies
any of the following:
\begin{itemize}
 \item[\rm (1)] $G$ is cubic and $3$-edge-colorable,

 \item[\rm (2)] $G$ is planar and bridgeless, or

 \item[\rm (3)] $G$ is hamiltonian.
 \end{itemize}

\end{corollary}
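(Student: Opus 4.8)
The plan is to feed the hypothesis through Tutte's description of nowhere-zero $4$-flows. Since $G$ has a nowhere-zero $4$-flow it has a nowhere-zero $\mathbb{Z}_2\times\mathbb{Z}_2$-flow, so $E(G)$ is covered by two even subgraphs $C_1$ and $C_2$ (the supports of the two coordinates); thus every edge lies in $C_1\cup C_2$ and each $C_i$ is an edge-disjoint union of circuits. I would then build the desired flow as a combination $f=f_1+4f_2$ of two integer flows of the signed graph: $f_1$ with values in $\{0,\pm1,\pm2,\pm3\}$ and $f_2$ with values in $\{0,\pm1\}$, arranged so that $f_1$ is nonzero on all of $C_1$ together with the unbalanced circuits of $C_2$, while $f_2$ is nonzero exactly on the balanced circuits of $C_2$. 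Granting this, a short case check over the location of an edge (in $C_1$, in a balanced circuit of $C_2$, or in an unbalanced circuit of $C_2$) shows that at least one summand survives and that $|f|\le 3+4=7$, so that $f$ is a nowhere-zero $8$-flow.

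Constructing $f_2$ is routine: on each \emph{balanced} circuit of $C_2$ one orients the circuit coherently and assigns the constant value $1$, which is a genuine nowhere-zero flow of the signed graph; summing these over all balanced circuits of $C_2$ yields the required flow with values in $\{0,\pm1\}$. The construction of $f_1$ is the substance of the proof. On a balanced circuit the constant flow $1$ again works, but an \emph{unbalanced} circuit supports no nowhere-zero flow by itself, and this is where flow-admissibility must enter: a flow-admissible signed graph has the property that every edge lies in a signed circuit, i.e.\ either a balanced circuit or a \emph{barbell} (two edge-disjoint unbalanced circuits joined by a path), and a barbell carries a nowhere-zero flow with value $1$ on its two circuits and value $2$ along the connecting path. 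The plan is therefore to pair all the unbalanced circuits (those of $C_1$ and those of $C_2$) into barbells and to let $f_1$ be the sum of the constant-$1$ flows on the balanced circuits of $C_1$ and the barbell flows on these pairs.

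Two points have to be settled. First, the unbalanced circuits must be pairable: the number of unbalanced circuits inside $C_i$ has the same parity as the number of negative edges of $C_i$, so a parity/counting argument together with the global consequences of flow-admissibility is needed to guarantee that all of them can be matched, a possible odd leftover from $C_1$ being paired against one from $C_2$, with connecting paths supplied by the connectivity inherent in flow-admissibility. Second, and this is the main obstacle, the connecting paths leave the circuits that spawned them and deposit their value-$2$ contribution on edges that already carry flow — from a circuit of $C_1$, from an unbalanced circuit of $C_2$ lying in $C_1\cap C_2$, or from another barbell path — so one must route the paths and fix the orientations so that no cancellation to $0$ occurs and the accumulated value of $f_1$ never exceeds $3$ in absolute value.

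I expect this bookkeeping, rather than the existence of the individual signed-circuit flows, to be the crux: controlling the overlaps on $C_1\cap C_2$ and along the connecting paths. A convenient way to tame the overlaps is to replace the pair $\{C_1,C_2\}$ by $\{C_1,\,C_1\triangle C_2\}$, which is again a pair of even subgraphs covering $E(G)$, and to orient the constituent circuits coherently so that contributions reinforce rather than cancel. Verifying that after these choices $f=f_1+4f_2$ is nowhere zero with $|f|\le 7$ then completes the proof.
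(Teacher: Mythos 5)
Your opening move is the same as the paper's: Lemma~\ref{LE:Tutte-4-flow} converts the nowhere-zero $4$-flow on $G$ into a nowhere-zero $\Z_2\times\Z_2$-flow, i.e.\ a cover of $E(G)$ by two even subgraphs, and since $3$-edge-colorable cubic, bridgeless planar, and hamiltonian graphs all admit nowhere-zero $4$-flows, proving the corollary amounts to proving Theorem~\ref{Th:4-8-flow}. The decomposition $f=f_1+4f_2$ with $|f_1|\le 3$ and $|f_2|\le 1$ is also sound in principle, and your $f_2$ (constant $1$ on the balanced circuits of $C_2$, which are pairwise edge-disjoint) is fine. The gap is the construction of $f_1$, and it is not a bookkeeping afterthought: it is the entire theorem. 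Two specific steps fail as stated. First, the pairing of unbalanced circuits into barbells is not available: an unbalanced circuit coming from $C_1$ and one coming from $C_2$ can share edges (every edge with flow value $(1,1)$ lies in $C_1\cap C_2$), and two circuits meeting in more than one vertex do not form a barbell by definition; moreover, in a non-cubic graph even the circuits inside a single $C_i$ can intersect one another at vertices. Second, your instruction to ``route the paths and fix the orientations so that no cancellation occurs and $|f_1|\le 3$'' is precisely the statement that needs proof; on a signed graph the relative signs of overlapping circuit and barbell-path flows are forced by conservation at every shared vertex, so ``orient coherently so that contributions reinforce'' is not something one can simply decree, and with several barbell paths crossing $C_1$, $C_1\cap C_2$, and each other, nothing in the sketch prevents an accumulated value of $4$ or a cancellation to $0$. (The parity point, by contrast, is repairable: among the three pairs of even subgraphs determined by the three nonzero values of the flow, at least one pair has an even total number of unbalanced circuits; this is the analogue of the paper's pigeonhole over color classes.)

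The paper's proof is designed precisely to avoid these difficulties. It first reduces to the cubic case (Theorem~\ref{Th:8-flow-cubic}) by blowing up each vertex of degree at least $4$ into an all-positive circuit while extending the $\Z_2\times\Z_2$-flow over the new edges; flow-admissibility survives this reduction by Lemma~\ref{Le:contraction}, and after it the two even subgraphs become $2$-factors, so all circuits in question are vertex-disjoint. In the cubic case the paper never builds barbell flows by hand: it chooses two color classes with equal parity of negative edges and then invokes the modulo-to-integer conversion lemma of Cheng et al.\ (Lemma~\ref{TH: 2-to-3}) together with Lemmas~\ref{LE: flow-extension}, \ref{eulerian-2-flow} and~\ref{HC-cover} to produce integer flows with controlled supports and values, which are then combined with multipliers $1$, $2$, $3$, $4$ to yield a nowhere-zero $8$-flow in every case. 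Those lemmas are exactly the machinery your plan would have to reinvent in order to control the overlaps you flag as ``the crux''; as written, the proposal remains a plan with its crucial steps open.
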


\section{Terminology and notation}
\label{SEC:notation} Graphs considered in this paper are finite
and may have multiple edges or loops. For terminology and
notation not defined here we refer the reader to \cite{BM2008,
West1996}.

Let $G$ be a graph. For a subset $U\subseteq V(G)$ let
$\delta_G(U)$ denote the set of edges with one endvertex in $U$
and the other in $V(G)-U$. A \emph{circuit} is a connected
$2$-regular graph. A path is \emph{nontrivial} if it contains
at least two vertices.

A \emph{signed graph} $(G,\sigma)$ is a graph $G$ endowed with
a \emph{signature} $\sigma\colon E(G)\to\{+1,-1\}$. An edge $e$
with $\sigma(e) =1$ is \emph{positive} and that with $\sigma(e)
=-1$ is \emph{negative}. An \emph{all-positive signed graph} is
a  signed graph with no negative edges; such a signed graph can
be identified with a graph without a signature. A circuit in
$(G,\sigma)$ is \emph{balanced} if it contains an even number
of negative edges and is \emph{unbalanced} otherwise. A signed
graph is called \emph{balanced} if it contains no unbalanced
circuit. A signed graph is \emph{unbalanced} if it is not
balanced. Two signed graphs $(G,\sigma)$ and $(G,\sigma')$ are
\emph{equivalent} if the sets of balanced circuits with respect
to $\sigma$ and $\sigma'$ are the same. Note that a signed
graph is balanced if and only if it is equivalent to an
all-positive signed graph.

A natural way of producing equivalent signed graphs is by
switching.  \emph{Switching} a signed graph $(G,\sigma)$ at a
vertex $u$ means reversing the signs of all edges incident with
$u$ except for the loops. It can be easily seen that switching
at a vertex does not change the parity of the number of
negative edges in a circuit, and hence the resulting signed
graph $(G,\sigma')$ is equivalent to $(G,\sigma)$. Conversely,
two signed graphs with the same underlying graph are equivalent
if and only if their signatures can be obtained from each other
by a sequence of switches.

Next, we deal with the concept of an orientation of a signed
graph, which is necessary for introducing the concept of a flow
on a signed graph. We think of each edge  $e=uv$ of a signed
graph as consisting of two half-edges $h_e^u$ and $h_e^v$,
where $h_e^u$ is incident with $u$ and $h_e^v$ is incident with
$v$ (possibly $u=v$). Let $H_G(v)$ be the set of all half-edges
incident with $v$ (we omit the subscript $G$ if no confusion
arises), and let $H(G)$ be the set of all half-edges of
$(G,\sigma)$. An {\em orientation} of $(G,\sigma)$ is a mapping
$\tau\colon H(G)\rightarrow \{+1,-1\}$ such that $\tau
(h_e^u)\tau (h_e^v)=-\sigma(e)$ for each edge $e$ of $G$. A
half-edge $h_e^u$ with $\tau(h_e^u)=1$ is interpreted as
\emph{oriented away from $u$} and that with $\tau(h_e^u)=-1$ as
\emph{oriented towards~$u$}.  A signed graph $(G,\sigma)$
equipped with an orientation $\tau$ is called an {\em oriented
signed graph}.  It may be worth mentioning that the concept of
an oriented signed graph is equivalent to the concept of a
\emph{bidirected graph} (see for example \cite{Bouchet1983,
Xu2005, Zyka1987}).

\begin{definition}\label{DE: Flow}
{\rm Let $(G,\sigma)$ be a signed graph and $\tau$ be an
orientation of $(G, \sigma)$. Let  $f\colon E(G) \to A$ a
mapping where $A$ is an abelian group.
\begin{itemize}
\item[(1)] The pair $(\tau,f)$ is called an \emph{$A$-flow}
    if for every vertex $v$ one has $\sum_{h\in
    H(v)}\tau(h)f(e_h) = 0$. If the orientation $\tau$ is
    known from the context, we simply write $f$ instead of
    $(\tau, f)$.

\item[(2)] The \emph{support} of $f$, denoted by $\supp
    (f)$, is the set of all edges $e$ with $f(e) \not = 0$.

\item[(3)]  An $A$-flow $(\tau,f)$ is said to be
    \emph{nowhere-zero}  if $\supp (f)=E(G)$.

\item [(4)]  If $A = \mathbb{Z}$ and $k\ge 2$ is an
    integer such that $|f(e)|< k$ for each edge
    $e$,  the $A$-flow $(\tau, f)$ is called a
    \emph{$k$-flow}.

\item [(5)]  For an arbitrary mapping $f\colon E(G)\to
    {\mathbb Z}$ we set $E_{f=\pm i}=\{e\in E(G) :
    |f(e)|=i\}.$
\end{itemize}
}
\end{definition}

Technically, switching changes the flows on a signed graph,
nevertheless, it only reverses the directions of the half-edges
incident with the vertex where switching is performed while the
directions of other half-edges and the flow values of all edges
remain the same.

\begin{definition}
{\rm A signed graph is \emph{flow-admissible} if it admits a
nowhere-zero integer $k$-flow for some integer $k\ge 2$.

It is
easy to see that the flow-admissibility of a signed graph is
invariant under switching. }
\end{definition}

Flow-admissible signed graphs can be conveniently described by
using the concept of a signed circuit.

\begin{definition}
{\rm A \emph{signed circuit} is defined as a signed graph of
any of the following two types:
\begin{itemize}
\item[(1)] a balanced circuit;
\item[(2)] a \emph{barbell}, which itself may be one of two
    kinds -- either the union of two unbalanced circuits
    that meet at a single vertex (\emph{short barbell}) or
    the union of two vertex disjoint unbalanced circuits
    connected by a path (\emph{long barbell}).
\end{itemize}
}
\end{definition}

The following characterization of flow-admissible signed graphs
appears in \cite{MS-eulerian}; the equi\-valence of Statements 
(1) and (2) is well known and can be traced back to Bouchet 
\cite{Bouchet1983}.

\begin{proposition}\label{Le:Flow-amissible}
{\rm (M\'a\v cajov\'a and \v Skoviera~\cite{MS-eulerian})} The
next three statements are equivalent for every connected signed 
graph $(G,\sigma)$.
\begin{itemize}

\item[{\rm (1)}] $(G,\sigma)$ is flow-admissible.

\item[{\rm (2)}] Each edge of $G$ is contained in a signed
    circuit of $(G,\sigma)$.

\item[{\rm (3)}] For each  edge  $e$ of $G$, the subgraph
    $G-e$ has no balanced component.
\end{itemize}
\end{proposition}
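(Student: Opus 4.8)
The plan is to establish the three equivalences through the cyclic chain $(1)\Rightarrow(3)\Rightarrow(2)\Rightarrow(1)$. The equivalence $(1)\Leftrightarrow(2)$ is the part attributed to Bouchet, so the genuinely new content lies in weaving condition $(3)$ into the chain. Of the two directions that touch $(3)$, the first is a short counting argument over a balanced component and the second is a more delicate structural construction; I expect the latter to be the crux.

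For $(1)\Rightarrow(3)$ I would argue by contraposition. Suppose some edge $e$ is such that $G-e$ has a balanced component $B$ joined to the rest of $G$ only through $e$, so that $\delta_G(V(B))=\{e\}$ with $e$ meeting $B$ in the end $u\in V(B)$. Because $B$ is balanced I may switch $(G,\sigma)$ at vertices of $B$ so that every edge inside $B$ becomes positive; by the remark preceding the definition of flow-admissibility, switching changes neither the flow values nor flow-admissibility. Now take any orientation $\tau$ and any flow $f$ and sum the conservation equation $\sum_{h\in H(w)}\tau(h)f(e_h)=0$ over all $w\in V(B)$. Every edge with both ends in $B$ is positive, hence its two half-edges receive opposite values of $\tau$ and cancel in the sum; the only surviving term comes from the half-edge of $e$ incident with $u$, giving $\tau(h_e^{u})f(e)=0$ and therefore $f(e)=0$. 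Thus no flow can be nowhere-zero, and $(G,\sigma)$ is not flow-admissible.

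For $(3)\Rightarrow(2)$ I would fix an edge $e$ and produce a signed circuit containing it. If $e$ is a bridge, write $B_1,B_2$ for the two components of $G-e$; by $(3)$ neither is balanced, so each $B_i$ contains an unbalanced circuit, and together with a path through $e$ these two unbalanced circuits form a long or short barbell through $e$. If $e$ is not a bridge, then $e$ lies on some circuit; if one such circuit is balanced we are done, and otherwise every circuit through $e$ is unbalanced. In the latter case I would take one unbalanced circuit $C$ through $e$, locate a second unbalanced circuit elsewhere in $G$, and join the two by a path to obtain a barbell containing $e$, adjusting for shared vertices to land in one of the two admissible barbell types. Guaranteeing that the required second unbalanced circuit exists and can be assembled into a genuine barbell, using only the no-balanced-component hypothesis, is the hard part of the whole proof, and it is here that a careful case analysis on the balance of the blocks containing $e$ will be needed.

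Finally, for $(2)\Rightarrow(1)$ I would first check that every signed circuit carries a nowhere-zero integer flow supported exactly on its own edges: a balanced circuit admits a nowhere-zero $2$-flow obtained by orienting it consistently after switching it all-positive, while a barbell admits a nowhere-zero integer flow with values in $\{\pm1,\pm2\}$; for instance, in a long barbell each unbalanced circuit is traversed with value $1$, creating a defect of $\pm2$ at its attaching vertex that is routed across the connecting path with value $2$, and in a short barbell the two defects cancel at the shared vertex. Granting $(2)$, I would choose signed circuits $O_1,\dots,O_m$ whose edge sets cover $E(G)$, with associated flows $f_1,\dots,f_m$, and form $f=\sum_{i=1}^m N^{\,i}f_i$ for a sufficiently large integer $N$. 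Since a sum of flows is again a flow and, on each edge, the top-indexed nonzero term dominates the bounded contributions of the lower-indexed circuits once $N$ is large, no cancellation occurs, so $f$ is a nowhere-zero $\mathbb{Z}$-flow and $(G,\sigma)$ is flow-admissible. I expect this last construction to be routine, with the real difficulty concentrated in the barbell-assembly step of $(3)\Rightarrow(2)$.
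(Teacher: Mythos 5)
The paper itself does not prove this proposition---it quotes it from \cite{MS-eulerian}---so your proposal has to be judged on its own merits, and it has two genuine gaps, one in each of the directions involving statement (3).

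First, your proof of $(1)\Rightarrow(3)$ silently assumes that the balanced component $B$ of $G-e$ satisfies $\delta_G(V(B))=\{e\}$, i.e.\ that $e$ is a bridge with exactly one end in $B$. That is not the general case: if $e$ is not a bridge, then $G-e$ is connected, the balanced component is all of $G-e$, both ends of $e$ lie in it, and $\delta_G(V(B))=\emptyset$, so your summation over $V(B)$ involves \emph{both} half-edges of $e$ and yields no conclusion. This missing case is not a technicality, because there the implication as stated is actually false: an all-positive triangle is flow-admissible, yet $G-e$ is a balanced path for every edge $e$, so (1) holds while (3) fails. The proposition is correct only under the additional hypothesis that $(G,\sigma)$ is unbalanced (which is how the original source states it, and which this paper's transcription drops), and in the unbalanced non-bridge case one must argue differently: switch so that $G-e$ is all-positive, observe that $e$ is then the unique negative edge, and sum the conservation law over \emph{all} vertices---every positive edge cancels and $e$ contributes $\pm 2f(e)$, forcing $f(e)=0$.

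Second, in $(3)\Rightarrow(2)$ you handle bridges correctly, but for a non-bridge $e$ all of whose circuits are unbalanced you only say one should ``locate'' a second unbalanced circuit and ``adjust for shared vertices,'' deferring this to a future case analysis. That deferred step is the crux, and it is absent. The missing argument runs as follows: by (3), $G-e$ is connected with no balanced component, hence unbalanced, so it contains an unbalanced circuit $C'$ avoiding $e$; let $C$ be any circuit through $e$, unbalanced by assumption. One must then prove $|V(C)\cap V(C')|\le 1$. If $C$ and $C'$ met in two or more vertices, take the arc $P$ of $C$ containing $e$ between two consecutive common vertices $x,y$, and let $Q_1,Q_2$ be the two $x$--$y$ paths of $C'$; then $P\cup Q_1$ and $P\cup Q_2$ are circuits through $e$, hence both unbalanced, which gives $\sigma(P)\sigma(Q_1)=-1=\sigma(P)\sigma(Q_2)$ (sign products), so $\sigma(Q_1)\sigma(Q_2)=+1$ and $C'=Q_1\cup Q_2$ would be balanced, a contradiction. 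Only with this lemma do $C$ and $C'$ meet in at most one vertex, so that a short or long barbell through $e$ exists; without it, ``adjusting'' has no content, since two unbalanced circuits sharing two or more vertices simply do not form a barbell. Your direction $(2)\Rightarrow(1)$, by contrast, is complete and correct: covering $E(G)$ by signed circuits, each carrying a $\{\pm1,\pm2\}$-valued flow, and summing with weights $N^i$ for $N\ge 3$ is the standard argument going back to Bouchet.
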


\section{Lemmas}
We begin with a lemma that concerns the effect of a contraction
on a flow-admissible signed graph.
To define a contraction in a signed graph, consider an edge $e=uv$ of a
signed graph $(G,\sigma)$
with $u\ne v$. If $\sigma(e)=1$, we \emph{contract} $e$ by
identifying $u$ with $v$ and by removing all positive loops
that may arise. If $\sigma(e)=-1$, we first switch the
signature at $u$ or $v$ to make $e$ positive and proceed as
before. If we repeatedly apply the contraction process to the edges 
of a connected subgraph~$H$ of $G$, contraction will produce a
signed graph $(G/H,\sigma/H)$ where $H$ is replaced with a
single vertex $v_H$ and possibly several negative loops attached to
it; if $H$ is balanced, no negative loops will occur. To
contract a disconnected graph, we contract each component
separately. Note that the signature $\sigma/H$ is determined by
$(G,\sigma)$ and $H$ only up to equivalence. Nevertheless, it
is easy to see that if $(G,\sigma)$ and $(G,\sigma')$ are
equivalent signed graphs, then so are $(G,\sigma/H)$ and
$(G,\sigma'/H)$.

\begin{lemma}\label{Le:contraction}
Let $(G,\sigma)$ be a signed graph and let $H$ be a bridgeless
balanced subgraph of $G$. Then $(G/H,\sigma/H)$ is
flow-admissible if and only if $(G,\sigma)$ is flow-admissible.
Moreover, every nowhere-zero flow on $(G,\sigma)$ induces one
on $(G/H,\sigma/H)$.
\end{lemma}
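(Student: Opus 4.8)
The plan is to prove both implications by transferring flows across the contraction, which simultaneously yields the ``Moreover'' clause. Since $H$ is balanced, I first switch $(G,\sigma)$ so that $H$ becomes all-positive; switching changes neither flow-admissibility nor, up to equivalence, the contracted graph, and it turns the contraction of $H$ into the ordinary operation of identifying the vertices of each component of $H$ and deleting the resulting positive loops. With $H$ all-positive, I classify the edges of $G$ relative to the components $K$ of $H$: \emph{internal} edges, having both ends in a single $K$ (these are the edges of $H$ together with the positive chords that contraction deletes), \emph{loop} edges that survive as negative loops at $v_K$, and \emph{boundary} edges joining two distinct vertices of $G/H$. Only the internal edges of the first kind vanish without trace in $G/H$; every other edge of $G$ keeps its identity and its sign there.

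For the direction ``$(G,\sigma)$ flow-admissible $\Rightarrow (G/H,\sigma/H)$ flow-admissible'', together with the ``Moreover'' clause, I project a given nowhere-zero $A$-flow $(\tau,f)$ from $(G,\sigma)$ to $(G/H,\sigma/H)$. The projection keeps the value $f(e)$ on every edge that persists in $G/H$ and discards the deleted internal edges; it is nowhere-zero because all surviving edges already carried nonzero values. The only thing to verify is conservation at each contracted vertex $v_K$, and this follows by summing the conservation law of $(\tau,f)$ over all vertices of $K$: each positive internal edge of $K$ contributes its value with opposite orientations at its two ends, so those terms cancel, while the boundary and negative-loop contributions reassemble exactly into the conservation sum at $v_K$. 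Hence $(\tau,f)$ induces a nowhere-zero flow on $(G/H,\sigma/H)$, and in particular the latter is flow-admissible.

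For the converse I extend a nowhere-zero integer flow. Start from a nowhere-zero $\mathbb{Z}$-flow $(\bar\tau,\bar f)$ on $(G/H,\sigma/H)$ and keep its values on all boundary and negative-loop edges of $G$; it remains to assign nonzero values to the internal edges of each component $K$ (the edges of $H$ and the deleted positive chords) so that conservation holds at every vertex of $K$. I treat this as an ordinary flow problem on the all-positive graph $K^{\circ}$ spanned by these internal edges: the already-fixed boundary and negative-loop values impose a demand $d(w)$ at each $w\in V(K)$, and conservation of $\bar f$ at $v_K$ forces $\sum_{w\in V(K)} d(w)=0$. Since $K^{\circ}$ is connected, some integer flow $g$ realizes these demands; the main obstacle is that $g$ need not be nowhere-zero. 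I resolve this using that each component $K$ of the bridgeless graph $H$ is itself bridgeless, so $K^{\circ}\supseteq K$ is bridgeless and carries a nowhere-zero integer flow $z$; replacing $g$ by $g+Nz$ for a sufficiently large integer $N$ leaves the demands unchanged, as $z$ satisfies conservation everywhere, while making every internal value nonzero. Performing this in each component and combining with the inherited values produces a nowhere-zero integer flow on $(G,\sigma)$, whence $(G,\sigma)$ is flow-admissible. The points needing care are the bookkeeping of the positive chords deleted by contraction, which lie in $K^{\circ}$ and receive their values here, and the verification that the demands sum to zero, both of which reduce to the conservation law at $v_K$.
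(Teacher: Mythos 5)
Your proof is correct, but the argument you give for the harder direction is genuinely different from the paper's. The easy implication together with the ``Moreover'' clause is handled essentially the same way in both (project the flow and verify conservation at each contracted vertex by summing the conservation laws over the vertices of a component $K$ of $H$, the positive internal edges contributing cancelling terms). For the converse, the paper never constructs a flow: it invokes Proposition~\ref{Le:Flow-amissible}(3), supposes $G-e$ has a balanced component for some edge $e$, and derives a contradiction --- if $e\notin E(H)$ the balanced component survives in $(G/H)-e$, while if $e\in E(H)$ then, after switching so that $G-e$ is all-positive, the $2$-edge-connectivity of $H$ yields a circuit of $H$ through the unique negative edge $e$, i.e.\ an unbalanced circuit inside the balanced subgraph $H$, which is absurd. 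You instead lift a concrete nowhere-zero integer flow from $(G/H,\sigma/H)$ to $(G,\sigma)$: keep the inherited values, solve the demand problem on the all-positive internal graph $K^{\circ}$ (solvable because $K^{\circ}$ is connected and the demands sum to zero by conservation at $v_K$), then add $N$ times a nowhere-zero integer flow $z$ on the bridgeless graph $K^{\circ}$ to destroy the zeros; for $N>\max_e|g(e)|$ this works since $|z(e)|\ge 1$, and your bookkeeping of positive chords versus negative loops matches the paper's definition of contraction. The price is that you need two classical facts about unsigned graphs that the paper avoids --- realizability of zero-sum integer demands on a connected graph, and existence of a nowhere-zero integer flow on every bridgeless graph (Seymour's $6$-flow theorem, or an elementary circuit-cover argument) --- but in exchange you prove something strictly stronger: every nowhere-zero flow on $(G/H,\sigma/H)$ lifts to a nowhere-zero integer flow on $(G,\sigma)$, a converse to the ``Moreover'' clause, whereas the paper's contradiction argument yields flow-admissibility only. (Your lift does not preserve the bound $k$ on the flow values, but that is irrelevant for flow-admissibility.)
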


\begin{proof}
If  $(G,\sigma)$ is flow-admissible, then clearly so is
$(G/H,\sigma/H)$. Moreover, every nowhere-zero flow on
$(G,\sigma)$, induces one on $(G/H,\sigma/H)$.

For the converse, we employ Lemma~\ref{Le:Flow-amissible}~(3).
Without loss of generality, we may assume that $G$ is
connected. We may also assume that $H$ is connected as we can
contract components one by one. Since $H$ is bridgeless, we are
thereby assuming that $H$ is $2$-edge-connected. Finally, we
may assume that $H$ is all-positive.

First of all, the conclusion is true whenever $(G,\sigma)$ is
balanced. Indeed, we only have to check that $G$ is bridgeless.
Suppose that there is a bridge $e$ in $G$. Clearly, $e$ is not
in $H$ because $H$ is bridgeless. But then $e$ is inherited
into $G/H$ as a bridge, which is impossible because $(G/H,
\sigma/H)$ is assumed to be flow-admissible, and therefore
bridgeless.

Now assume that $(G,\sigma)$ is not balanced. Then $(G/H,
\sigma/H)$ is not balanced either.  Suppose to the
contrary that $(G/H,\sigma/H)$ is flow-admissible, but
$(G,\sigma)$ is not. Then $G$ contains an edge $e$ such that
$G-e$ has a balanced component. If $e$ is not contained in $H$,
then $(G/H)-e$ has a balanced component, contradicting the
assumption that $(G/H,\sigma/H)$ is flow-admissible. Hence
$e\in E(H)$. Since $H$ is $2$-edge-connected, $H-e$ is
connected, and therefore $G-e$ is connected, too. At the same
time, $G-e$ has a balanced component, so $(G-e,\sigma)$ is
balanced. Let us switch the signature of $G$ so that $G-e$
becomes all-positive. Since $(G,\sigma)$ is not balanced,
switching results in a signed graph $(G,\sigma')$ where $e$ is
the only negative edge. Recall that $H$ is $2$-edge-connected,
so there is a circuit $C$ through $e$ in~$H$. Clearly, $C$ is
unbalanced while $(H,\sigma')$ balanced, which is impossible.
This contradiction proves that $(G,\sigma)$ is flow-admissible.
\end{proof}

We will also need the following five lemmas.

\begin{lemma}\label{LE: flow-extension}
{\rm (Li et al. \cite{LLLZZ})} Let $(G,\sigma)$ be a signed
graph and $C$ be a chordless circuit in $G$ whose edges are all
positive. Suppose that $2\leq |\delta(V(C))| \leq 3$ and $k\geq
4$ is an integer. If $(G/C,\sigma)$  has a nowhere-zero
$k$-flow $f$, then $f$ can be extended to a nowhere-zero
$k$-flow on $(G,\sigma)$.
\end{lemma}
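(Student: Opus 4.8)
The plan is to reduce the extension problem to the choice of a single ``circulation'' value around $C$, and then to use a short counting argument -- this is exactly where the hypotheses $k\ge 4$ and $|\delta(V(C))|\le 3$ enter -- to show that a feasible value exists.

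First I would fix the data. Since all edges of $C$ are positive, when passing from $(G/C,\sigma)$ back to $(G,\sigma)$ I am free to orient the edges of $C$ as a consistently directed cycle, while keeping the orientation and the values $f(e)$ of the boundary edges $e\in\delta(V(C))$ exactly as they are in $(G/C,\sigma)$. Because $C$ is chordless, every edge incident with $V(C)$ is either an edge of $C$ or a boundary edge, and the boundary edges meet $C$ in a set $T$ of at most $|\delta(V(C))|\le 3$ attachment vertices. These vertices cut $C$ into $|T|$ arcs, and at every degree-two vertex in the interior of an arc flow conservation forces the two incident circuit edges to carry equal values. Hence the flow is constant along each arc, and extending $f$ amounts to assigning one value $x_j$ to each arc $A_j$.

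Next I would write the conservation equations at the attachment vertices. Orienting the arcs consistently, conservation at the $j$-th attachment vertex reads $x_{j+1}=x_j+b_j$, where $b_j$ is the net inflow contributed there by the boundary edges; summing these recovers conservation at $v_C$ in $(G/C,\sigma)$, so $\sum_j b_j=0$ and the system is consistent. Thus the arc-values form a one-parameter family $x_j=c+a_j$, where the $a_j$ are fixed partial sums of the $b_j$ and $c$ is a free circulation parameter. It remains to choose an \emph{integer} $c$ so that every $x_j$ is nonzero and satisfies $|x_j|\le k-1$.

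The hard part is to guarantee that such a $c$ exists. The magnitude constraints $|c+a_j|\le k-1$ confine $c$ to an interval of $2k-1-R$ integers, where $R=\max_j a_j-\min_j a_j$, and the crux is the estimate $R\le k-1$. In the principal case of three distinct attachment vertices, each carrying a single boundary edge, the partial sums relative to one arc are $0,\ b_1,\ b_1+b_2=-b_3$, so their pairwise differences are $b_1,b_2,b_3$, each of absolute value at most $k-1$; hence $R\le k-1$. The remaining configurations, in which the boundary edges meet $C$ in only one or two vertices, give at most two arcs and are settled by the same, easier computation. Granting $R\le k-1$, at least $k$ integers $c$ meet the magnitude bound, and discarding the at most three values of $c$ that would force some $x_j=0$ still leaves at least $k-3\ge 1$ admissible choices precisely because $k\ge 4$. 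Any such $c$ yields nonzero, in-range values on all arcs, and therefore a nowhere-zero $k$-flow on $(G,\sigma)$ extending $f$.
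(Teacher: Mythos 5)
First, a point of comparison that cannot be made: this paper never proves Lemma~\ref{LE: flow-extension} at all --- it is imported verbatim from Li et al.~\cite{LLLZZ} --- so there is no in-paper proof to measure your argument against, and I can only judge it on its own terms. On those terms it is correct. The reduction to a one-parameter family is right: since $C$ is chordless and all-positive, you may orient it as a directed cycle, the flow must be constant on each of the at most three arcs cut out by the attachment vertices, the cyclic system $x_{j+1}=x_j+b_j$ is consistent exactly because $\sum_j b_j=0$ is conservation at $v_C$ in $(G/C,\sigma)$, and the count ``$2k-1-R\ge k$ feasible integers $c$, minus at most $3$ values forcing some $x_j=0$, leaves at least $k-3\ge 1$'' is where both hypotheses $k\ge 4$ and $|\delta(V(C))|\le 3$ do their work. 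The one step you should write out rather than wave at is the configuration in which the three boundary edges meet $C$ in exactly \emph{two} vertices, say two edges at $w_1$ and one at $w_2$. There the net inflow $b_1$ at $w_1$ is a sum of two edge values, so the naive bound is only $|b_1|\le 2(k-1)$, under which your interval could shrink to a single integer and the zero-exclusions could then kill it; to get $R\le k-1$ you must use $b_1=-b_2$ together with the fact that $b_2$ comes from a \emph{single} edge, hence $|b_1|=|b_2|\le k-1$. This is available to you (you proved $\sum_j b_j=0$), and it is indeed ``the same computation'' as in the three-vertex case, where the bound on $a_3-a_1$ likewise uses the zero-sum identity --- but as written, ``settled by the same, easier computation'' leaves precisely the delicate instance to the reader. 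Note also that a $2{+}2$ split, which would genuinely break the bound $R\le k-1$, is excluded only by the hypothesis $|\delta(V(C))|\le 3$; saying this explicitly would make clear that the case analysis is exhaustive and tight.
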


\begin{lemma}\label{eulerian-2-flow}
{\rm (Xu and Zhang \cite{Xu2005})}  A signed graph $(G,\sigma)$
admits a nowhere-zero $2$-flow if and only if each component of
$(G,\sigma)$ is eulerian and has an even number of negative
edges.
\end{lemma}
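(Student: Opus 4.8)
The plan is to prove both directions by reducing a nowhere-zero $2$-flow to the special case in which $f$ takes the constant value $1$. Indeed, if $(\tau,f)$ is any nowhere-zero $2$-flow then $f(e)\in\{+1,-1\}$ for every edge $e$, and for each edge with $f(e)=-1$ we may reverse $\tau$ on both of its half-edges and replace $f(e)$ by $+1$; this preserves the defining identity $\tau(h_e^u)\tau(h_e^v)=-\sigma(e)$ as well as every vertex sum, since each product $\tau(h)f(e_h)$ is left unchanged. Hence $(G,\sigma)$ has a nowhere-zero $2$-flow if and only if it admits an orientation $\tau$ for which $\sum_{h\in H(v)}\tau(h)=0$ at every vertex $v$. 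Throughout I would work one component at a time, so I may assume $G$ is connected.

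For necessity, fix such an orientation $\tau$. At each vertex the condition $\sum_{h\in H(v)}\tau(h)=0$ forces the number of half-edges at $v$ with $\tau(h)=+1$ to equal the number with $\tau(h)=-1$, so $\deg(v)$ is even and $G$ is eulerian. To count negative edges I would sum the vertex conditions over all of $G$:
\[
0=\sum_{v\in V(G)}\sum_{h\in H(v)}\tau(h)=\sum_{e=uv\in E(G)}\bigl(\tau(h_e^u)+\tau(h_e^v)\bigr).
\]
A positive edge satisfies $\tau(h_e^u)\tau(h_e^v)=-1$ and so contributes $0$, whereas a negative edge has $\tau(h_e^u)=\tau(h_e^v)$ and contributes $\pm 2$. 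Writing $p$ and $q$ for the numbers of negative edges contributing $+2$ and $-2$ respectively, we obtain $2p-2q=0$, hence $p=q$ and the number $p+q$ of negative edges is even.

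For sufficiency, suppose $G$ is connected, eulerian, and has an even number of negative edges, and fix an Euler tour $v_0,e_1,v_1,\dots,e_m,v_m=v_0$ (which exists since $G$ is connected with all degrees even). I would build $\tau$ by propagating a sign $\varepsilon\in\{+1,-1\}$ along the tour: starting from $\varepsilon_0=+1$, upon traversing $e_i$ I set the departing half-edge of $e_i$ at $v_{i-1}$ to $\varepsilon_{i-1}$, which by the orientation identity forces the arriving half-edge at $v_i$ to equal $-\sigma(e_i)\varepsilon_{i-1}$, and I then continue with the updated sign $\varepsilon_i=\sigma(e_i)\varepsilon_{i-1}$. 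With this rule every intermediate visit of the tour to a vertex uses an arriving and a departing half-edge whose values cancel, so each such visit contributes $0$ to the relevant vertex sum; the same holds for a loop, whose two half-edges belong to two consecutive visits. Consequently every vertex sum vanishes provided the tour closes up consistently at $v_0$, i.e. provided the final arriving half-edge balances the initial departing one. This closing condition amounts to $\varepsilon_m=\varepsilon_0$, and since $\varepsilon_m=\varepsilon_0\prod_{i=1}^m\sigma(e_i)=\varepsilon_0(-1)^{t}$, where $t$ is the number of negative edges, it holds precisely because $t$ is even. Setting $f\equiv 1$ then yields the desired nowhere-zero $2$-flow, and reassembling the components completes the proof.

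The main obstacle I expect is the bookkeeping in the sufficiency argument: one must verify that the sign-propagation is well defined despite the tour passing through a vertex several times and despite loops, and confirm that all half-edges at a vertex are partitioned into balancing arrival/departure pairs, with the sole exception of the single initial and final half-edges at $v_0$. Once this is set up cleanly, the parity hypothesis enters in exactly one place — the closing condition $\varepsilon_m=\varepsilon_0$ — which is the conceptual heart of the statement and the precise point where the assumption of an even number of negative edges is indispensable.
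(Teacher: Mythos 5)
Your proof is correct. Note that the paper itself does not prove this lemma at all --- it is quoted verbatim from Xu and Zhang \cite{Xu2005} --- so there is no in-paper argument to compare against; your write-up is a sound, self-contained proof of the cited result. The two key moves both check out: (i) the normalization reducing any nowhere-zero $2$-flow to the constant flow $f\equiv 1$ is legitimate, since flipping $\tau$ on both half-edges of an edge preserves the defining identity $\tau(h_e^u)\tau(h_e^v)=-\sigma(e)$ and leaves every product $\tau(h)f(e_h)$, hence every vertex sum, unchanged; and (ii) in the necessity direction, the global sum correctly isolates the negative edges, since a positive edge forces oppositely signed half-edges (contribution $0$) while a negative edge forces equally signed ones (contribution $\pm 2$), giving $p=q$ and an even count per component. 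In the sufficiency direction, your Euler-tour sign propagation is well defined because each edge is traversed exactly once (so each half-edge is assigned exactly once), and the half-edges at a vertex of degree $2d$ are partitioned into the $d$ arriving/departing pairs of the tour's passages, with loops handled correctly since their two half-edges lie in consecutive passages; the single unpaired initial/final pair at $v_0$ contributes $\varepsilon_0-\varepsilon_m$, and $\varepsilon_m=(-1)^t\varepsilon_0$ makes the even-parity hypothesis enter at exactly that one point, as you say. The only cosmetic remark is that a component with no edges needs the (trivial) separate mention, which your component-by-component framing already absorbs.
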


\begin{lemma}\label{TH: 2-to-3}
{\rm (Cheng et al. \cite{CLLZ2018})} Let $(G,\sigma)$ be a
connected signed graph. If $(G,\sigma)$ admits a $\Z_2$-flow
$f_1$ such that $\supp(f_1)$ contains an even number of
odd-size components, then it admits a $3$-flow $f_2$ such that
$\supp(f_1)=E_{f_2 = \pm 1}$.
\end{lemma}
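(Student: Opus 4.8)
The plan is to lift $f_1$ over the connected components of its support, using Lemma~\ref{eulerian-2-flow} as the basic building block and a barbell-type transport to handle the components that obstruct a $2$-flow. First I would record that, since all orientation signs $\tau(h)$ and all edge signs become $1$ modulo $2$, a $\Z_2$-flow is nothing but an even subgraph: writing $S=\supp(f_1)$, every vertex has even degree in $S$, so each component $K$ of $S$ is connected and eulerian. I read an \emph{odd-size} component as one containing an odd number of negative edges; the reason this is the right reading is Lemma~\ref{eulerian-2-flow}, by which a component $K$ admits a nowhere-zero $2$-flow (equivalently, an all-$\pm1$ flow) \emph{exactly} when it is connected, eulerian, and has an even number of negative edges. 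Thus the even-size components are precisely those that can be flowed with values $\pm1$ on their own, while the odd-size components are the genuine obstructions, and the hypothesis says there is an even number of them. The target is a $3$-flow $f_2$ equal to $\pm1$ on every edge of $S$ and lying in $\{0,\pm2\}$ off $S$, for then automatically $E_{f_2=\pm1}=S=\supp(f_1)$.

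For the construction I would treat the two kinds of component separately. On each even-size component I take the nowhere-zero $2$-flow supplied by Lemma~\ref{eulerian-2-flow}, giving $\pm1$ on all of its edges and conserving flow at every vertex. For an odd-size component $K$ I attach a single negative loop at a chosen vertex $w_K$; the enlarged graph is still connected and eulerian and now has an even number of negative edges, so Lemma~\ref{eulerian-2-flow} furnishes a nowhere-zero $2$-flow on it. Deleting the loop leaves $\pm1$ on every edge of $K$ and creates a net imbalance of exactly $\pm2$ at $w_K$, with conservation intact elsewhere. Because the number of odd-size components is even, I pair them up, and by negating the $2$-flow on individual components I arrange that the imbalances split into equally many $+2$'s and $-2$'s. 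It then remains to cancel these imbalances using only even values off $S$: since $G$ is connected (and unbalanced once an odd-size component is present, so the prescribed divergence is realizable), I route a $\{0,\pm1\}$-valued integer function $h$ whose divergence is $\mp1$ at the imbalanced vertices and $0$ elsewhere, and set $f_2$ to be the sum of all the $\pm1$ assignments together with $2h$. By construction $f_2$ is a genuine flow that is odd on $S$ and even off $S$.

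The main obstacle is purely one of \emph{range control}: the transport $h$ may be forced across edges of $S$, where a $\pm1$ reinforced by $\pm2$ could become $\pm3$, and across bridges off $S$, where several routed paths could accumulate to $\pm4$ or more. I expect to dispose of the first issue by a minimality argument: choosing $h$, among all integer functions with the prescribed divergence, to minimize $\sum_e|f_2(e)|$. At a minimizer no support edge can carry $|f_2|\ge 3$, because every edge of $S$ lies on a circuit of $S$ and hence is not a bridge of $G$, so one may push an integer circulation around a cycle through the offending edge to lower its value while strictly decreasing the objective. The bridges require a little foresight rather than pushing, since cycles are unavailable there; here I would exploit the freedom in the signs of the component imbalances, choosing them by a leaf-to-root balancing on the tree of $2$-edge-connected components so that across every bridge the net demand is at most one unit, whence $|f_2|\le2$ on each bridge. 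Combining these, $f_2$ is a $3$-flow, and a final parity check closes the argument: on $S$ the value $f_2=(\pm1)+2h$ is odd and bounded by $2$, hence exactly $\pm1$, while off $S$ it is even, hence in $\{0,\pm2\}$, giving $E_{f_2=\pm1}=S=\supp(f_1)$. I anticipate the bridge-balancing and the cycle-pushing bound to be the only genuinely delicate points, both becoming routine once one knows that all support edges are non-bridges.
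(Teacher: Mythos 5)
First, a point of comparison: the paper does not prove this lemma at all---it is quoted from Cheng et al.\ \cite{CLLZ2018}---so your attempt can only be measured against the proof in that reference. Your set-up is sound and matches the known strategy: reading an ``odd-size'' component of $S=\supp(f_1)$ as one containing an odd number of negative edges is indeed the intended meaning (it is exactly how the lemma is invoked for unbalanced circuits of $2$-factors in Section~4); the observation that $S$ is an even subgraph, the use of Lemma~\ref{eulerian-2-flow} on the even components, the negative-loop trick producing a $\pm1$-valued assignment on an odd component with a single defect $\pm2$ at a prescribed vertex, and the pairing of defects with opposite signs via the parity hypothesis are all correct; the bridge analysis via sign choices on the block tree is also repairable along the lines you sketch.

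The genuine gap is the range control on support edges, which is the crux of the lemma. You propose to take a transport $h$ minimizing $\sum_e|f_2(e)|$ and, if a support edge carries $|f_2|\ge 3$, to ``push an integer circulation around a cycle through the offending edge,'' justified because every edge of $S$ lies on a circuit of $S$. But in a signed graph a circuit supports a nonzero circulation only if it is \emph{balanced}, and the circuit of $S$ through the offending edge may well be unbalanced: every odd component contains an unbalanced circuit (decompose it into circuits; an odd total of negative edges forces one of them to be unbalanced), and in the paper's application the components of $S$ are single circuits of a $2$-factor, so an edge of an unbalanced component lies on \emph{no} balanced circuit inside $S$ at all. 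The other signed circuits, barbells, are parity-hostile: preserving the pattern ($f_2$ odd on $S$, even off $S$) forces pushes in even multiples, and a barbell push then changes its path edges by $\pm4$, useless for capping values at $2$. Moreover, even when a balanced circuit through the edge exists, pushing $\pm2$ around it lowers $|f_2|$ by $2$ at one edge but may raise it by $2$ on every other circuit edge, so the objective need not strictly decrease; the classical Tutte minimize-and-push conversion from modulo flows to bounded integer flows is precisely what \emph{fails} for signed graphs---that failure is the raison d'\^{e}tre of \cite{CLLZ2018}. A correct argument, in the spirit of that reference, prevents overload from ever arising instead of trying to repair it afterwards: route the $\pm2$ transport in the graph obtained by contracting the components of $S$, so that it uses only off-support edges, and absorb the through-traffic at a component by re-choosing its $\pm1$ assignment (reversing a segment of an Euler tour moves a defect of $2$ between two vertices while keeping every value $\pm1$). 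Without some such mechanism your minimality argument does not close.
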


\begin{lemma}\label{HC-cover}
{\rm (Li et al. \cite{LLLZZ})} Let $C$ be an unbalanced circuit
of a signed graph $(G,\sigma)$. If $(G,\sigma)$ is
flow-admissible and $G-E(C)$ is balanced, then $(G,\sigma)$ has
a $4$-flow $f$ satisfying the following:
\begin{itemize}
\item[{\rm (1)}] $E(C) \subseteq \supp(f)$
\item[{\rm (2)}] In $H = G[\supp(f)]$, the subgraph induced
    by $\supp(f)$,  each vertex of $H-V(C)$  has degree at
    most $3$ in $H$.
\end{itemize}
\end{lemma}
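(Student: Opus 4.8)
The plan is to switch to a convenient signature, extract from an unbalanced circuit a forced parity obstruction, realize a flow that respects it, and only then fight for the degree bound.

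\medskip\noindent\textbf{Reduction and the parity obstruction.}
First I would switch the signature so that $B:=G-E(C)$ is all-positive; this is possible because $B$ is balanced, and switching alters neither the support nor the values of any flow. After switching, every negative edge of $(G,\sigma)$ lies on $C$; write $N\subseteq E(C)$ for this set, so that $|N|$ is odd. I claim $|N|\ge 3$: if $N=\{e^\ast\}$, then $G-e^\ast$ would be all-positive, so every component of $G-e^\ast$ would be balanced, contradicting flow-admissibility via Proposition~\ref{Le:Flow-amissible}(3). The pivotal observation is a parity constraint satisfied by \emph{every} integer flow $f$. Summing the conservation condition $\sum_{h\in H(v)}\tau(h)f(e_h)=0$ over all $v$, each positive edge contributes the two opposite terms $\tau(h_e^u)f(e)$ and $\tau(h_e^v)f(e)=-\tau(h_e^u)f(e)$, which cancel, while each negative edge contributes $2\tau(h_e^u)f(e)$ because its two half-edges carry equal signs. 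Hence $\sum_{e\in N}\varepsilon_e\,f(e)=0$ for suitable $\varepsilon_e\in\{+1,-1\}$. Since $|N|$ is odd, the integers $\varepsilon_e f(e)$ cannot all be odd; so at least one edge of $N$ is forced to carry an \emph{even} value. As $E(C)\subseteq\supp(f)$ requires $f(e)\ne 0$ on $C$, that even value must be $\pm2$. Conversely, because $|N|\ge 3$, the distribution assigning $\pm2$ to one negative edge and $\pm1$ to the rest, with signs chosen so the signed sum vanishes, is realizable.

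\medskip\noindent\textbf{Constructing the flow.}
Next I would produce the flow with values bounded by $3$. Choose a negative edge $e^\ast\in N$ to be doubled and a path $P$ inside $B$, internally disjoint from $V(C)$, joining the endpoints of $e^\ast$; then $F:=(E(C)\setminus\{e^\ast\})\cup E(P)$ is a circuit whose negative edges are exactly $N\setminus\{e^\ast\}$, an even number, so $F$ is \emph{balanced}. The indicator of $F$ is then a $\Z_2$-flow to which Lemma~\ref{TH: 2-to-3} applies, yielding a $3$-flow $g$ with $E_{g=\pm 1}=F$. In particular every edge of $C\setminus\{e^\ast\}$ carries value $\pm1$, and every remaining edge of $\supp(g)$ carries value $\pm2$. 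By the parity obstruction applied to $g$, the leftover negative edge $e^\ast$ must carry an even value; the only problem is that this value could a priori be $0$. If that happens I would correct $g$ by adding a flow supported on a signed circuit through $e^\ast$, whose existence is guaranteed by Proposition~\ref{Le:Flow-amissible}(2), scaled and oriented to make $g(e^\ast)=\pm2$ without exceeding value $3$ elsewhere and without vanishing on $C\setminus\{e^\ast\}$. This gives a $4$-flow $f$ with $E(C)\subseteq\supp(f)$, establishing~(1).

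\medskip\noindent\textbf{The degree condition.}
Finally, for~(2), note that the value-$1$ edges form the even subgraph $F$, which by the choice of $P$ has degree at most $2$ at every vertex outside $V(C)$; so the issue is the value-$2$ edges. The natural strategy is to route all the vertex demands that the values on $C$ impose on $B$ along \emph{internally disjoint} paths, so that each vertex of $H-V(C)$ lies on at most one such path and hence has degree at most $2$ from them, giving total degree at most $3$. To make this routing exist with values staying below $4$, I would first contract the maximal balanced bridgeless subgraphs of $B$ using Lemma~\ref{Le:contraction}, build the flow on the resulting sparse, essentially sub-cubic graph, and then re-expand along chordless positive circuits of boundary size at most $3$ by Lemma~\ref{LE: flow-extension}, each expansion increasing off-$C$ degrees only by a bounded amount. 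I expect this coupling---keeping $\deg_H\le 3$ off $C$ while simultaneously keeping $E(C)\subseteq\supp(f)$ and all values below $4$---to be the main obstacle, precisely because the parity-forced $\pm2$ edges must be threaded through $B$ without concentrating at any vertex. The switching reduction and the parity argument are routine; the real work is the controlled routing in $B$.
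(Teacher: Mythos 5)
This lemma is not proved in the paper at all---it is imported verbatim from Li et al.~\cite{LLLZZ}---so your attempt must stand on its own, and it does not. The preliminary part is fine: switching so that all negative edges lie on $C$, the observation that $|N|$ is odd and $|N|\ge 3$ (via Proposition~\ref{Le:Flow-amissible}), and the parity argument forcing an even value on some edge of $N$ are all correct. But the construction then rests on an unjustified step: the existence of a path $P$ in $B=G-E(C)$ joining the endpoints of $e^\ast$ and \emph{internally disjoint from} $V(C)$. Flow-admissibility only guarantees a signed circuit through $e^\ast$; such a circuit may enter and leave $V(C)$ repeatedly and use other edges of $C$, and nothing produces the special chord-path you need. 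Two further slips: Lemma~\ref{TH: 2-to-3} requires an even number of odd-size components in the support, and your $F$ is a single circuit whose parity you do not control (here you should simply invoke Lemma~\ref{eulerian-2-flow}, since a balanced circuit is eulerian with an even number of negative edges); and the ``correction'' when $g(e^\ast)=0$ cannot be carried out inside $C\cup P$ at all, because in that theta graph both circuits through $e^\ast$ are unbalanced and share $e^\ast$, so $e^\ast$ lies in no signed circuit of $C\cup P$ and hence in the support of no flow on it. The correction must therefore drag in further edges of $B$, and if the only signed circuits through $e^\ast$ are barbells, your ``scaled and oriented'' adjustment either exceeds the value bound (scaling a barbell flow by $2$ gives a $4$ on its path) or risks cancelling the $\pm1$ values on $C\setminus\{e^\ast\}$.

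The decisive gap, however, is condition (2), which is the actual content of the lemma and which you never prove. Once a correction flow (say twice the indicator of a balanced circuit $D$ through $e^\ast$) is added, every vertex at which $D$ crosses an internal vertex of $P$ acquires degree $4$ in $G[\supp(f)]$, violating (2). Your last paragraph concedes exactly this and replaces a proof by a plan: there is no argument that the $\pm2$-carrying edges can be routed through $B$ along internally disjoint paths, no verification that Lemma~\ref{Le:contraction} and Lemma~\ref{LE: flow-extension} apply in the way you want (the latter needs chordless all-positive circuits with boundary of size $2$ or $3$, and it preserves the flow bound, not vertex degrees off $C$), and no control of the flow values during re-expansion. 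Ending with ``I expect this coupling\dots to be the main obstacle'' and ``the real work is the controlled routing in $B$'' is an acknowledgment that the theorem is unproved. As it stands, your argument yields at most a conditional weaker statement (a $4$-flow with $E(C)\subseteq\supp(f)$, assuming the path $P$ and a balanced circuit through $e^\ast$ exist), not the lemma.
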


The next result is well known.

\begin{lemma}\label{LE:Tutte-4-flow}
{\rm (Tutte \cite{Tutte54})} A graph admits a nowhere-zero
$4$-flow if and only it admits a nowhere-zero
$\Z_2\times\Z_2$-flow. Moreover, a cubic graph admits a
nowhere-zero $4$-flow if and only if it is $3$-edge-colorable.

\end{lemma}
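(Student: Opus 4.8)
The plan is to treat $G$ as an ordinary (all-positive) graph and to prove the two assertions separately, reducing everything to the language of even subgraphs. First I would record the standard dictionary: a $\Z_2$-flow on $G$ is the same thing as an even subgraph (an edge set meeting every vertex in even degree, i.e.\ an element of the cycle space over $\Z_2$), and consequently a nowhere-zero $\Z_2\times\Z_2$-flow is exactly an ordered pair $(f_1,f_2)$ of $\Z_2$-flows with no common zero. Writing $C_i=\supp(f_i)$, this says precisely that $C_1$ and $C_2$ are even subgraphs with $C_1\cup C_2=E(G)$. Thus the first equivalence in the lemma becomes: $G$ has a nowhere-zero $4$-flow if and only if $E(G)$ is the union of two even subgraphs.

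For the direction ``two even subgraphs $\Rightarrow$ $4$-flow'' I would argue constructively. Each even subgraph $C_i$ decomposes into edge-disjoint circuits, and orienting these circuits cyclically turns $C_i$ into an integer flow $g_i$ taking values in $\{+1,-1\}$ on $C_i$ and $0$ elsewhere. Setting $f=g_1+2g_2$ gives an integer flow whose value on an edge is $\pm1$ if the edge lies in $C_1\setminus C_2$, is $\pm2$ if it lies in $C_2\setminus C_1$, and is one of $\{\pm1\pm2\}=\{\pm1,\pm3\}$ if it lies in $C_1\cap C_2$. Since $C_1\cup C_2=E(G)$, every edge is covered, so $f$ is nowhere-zero with $|f(e)|\le3$; that is, $f$ is a nowhere-zero $4$-flow.

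The converse direction is the crux. A nowhere-zero integer $4$-flow reduces mod $4$ to a nowhere-zero $\Z_4$-flow, but one cannot pass pointwise from a $\Z_4$-flow to a $\Z_2\times\Z_2$-flow: every group homomorphism $\Z_4\to\Z_2\times\Z_2$ sends the element $2$ to $0$ (both groups have order $4$, but $\Z_2\times\Z_2$ has exponent $2$), so edges carrying the value $2$ would be killed. The resolution is Tutte's group-independence principle, which is exactly the content being invoked here. Concretely, I would show by deletion–contraction on a non-loop edge $e$ that the number $N_A(G)$ of nowhere-zero $A$-flows satisfies $N_A(G)=N_A(G/e)-N_A(G\setminus e)$, with loops contributing a factor $|A|-1$ and any bridge forcing $N_A(G)=0$; an induction then shows that $N_A(G)$ depends only on $|A|$. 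In particular $N_{\Z_4}(G)>0$ if and only if $N_{\Z_2\times\Z_2}(G)>0$, so a $\Z_4$-flow (hence an integer $4$-flow) yields a $\Z_2\times\Z_2$-flow. This polynomiality step is the main obstacle, as it is the only place where more than bookkeeping is required.

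Finally, for the ``moreover'' clause I would combine the first equivalence with the classical colour/flow correspondence for cubic graphs. Identify the three colours with the three nonzero elements of $\Z_2\times\Z_2$. In a cubic graph a proper $3$-edge-colouring assigns to the three edges at each vertex the three distinct nonzero elements of $\Z_2\times\Z_2$, whose sum is $(0,0)$; since every element of $\Z_2\times\Z_2$ is its own inverse, orientation is irrelevant and this assignment is exactly a nowhere-zero $\Z_2\times\Z_2$-flow. Conversely, if three nonzero elements of $\Z_2\times\Z_2$ sum to $(0,0)$ they must be pairwise distinct, for two equal values would force the third to be $(0,0)$; hence a nowhere-zero $\Z_2\times\Z_2$-flow restricts at each vertex to a proper colouring. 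Together with the first equivalence, this shows that a cubic graph admits a nowhere-zero $4$-flow if and only if it is $3$-edge-colourable.
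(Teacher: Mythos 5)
Your proof is correct, but there is nothing in the paper to compare it against: the paper states this lemma as a known result with a citation to Tutte's 1954 paper and gives no proof at all. What you have written is essentially the classical argument, assembled from the three standard ingredients. The dictionary between nowhere-zero $\Z_2\times\Z_2$-flows and covers of $E(G)$ by two even subgraphs is right, and your construction $f=g_1+2g_2$ with values in $\{\pm1,\pm2,\pm3\}$ correctly handles the cancellation on $C_1\cap C_2$ (the values there are $\pm1,\pm3$, never $0$). You also correctly identify the one genuinely nontrivial point: there is no pointwise passage from $\Z_4$ to $\Z_2\times\Z_2$ because every homomorphism kills the element $2$, so one must invoke group-independence of the count of nowhere-zero $A$-flows. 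Your deletion--contraction recursion $N_A(G)=N_A(G/e)-N_A(G\setminus e)$ is the standard route (the bijection being that a nowhere-zero $A$-flow on $G/e$ extends uniquely to a flow on $G$ that is nonzero off $e$, and those vanishing on $e$ are exactly the nowhere-zero flows of $G\setminus e$); you sketch rather than execute the induction, but for a lemma of this vintage that is an acceptable level of detail. The coloring correspondence for cubic graphs is likewise correct: three nonzero elements of $\Z_2\times\Z_2$ sum to zero if and only if they are pairwise distinct, and the exponent-$2$ property is exactly what makes orientations irrelevant. One pedantic remark: the paper allows loops, and a cubic graph with a loop is neither $3$-edge-colorable nor admits a nowhere-zero flow (the companion edge at the loop vertex is forced to carry $0$), so the equivalence survives this degenerate case, though your vertex-by-vertex argument implicitly assumes the three edge ends at a vertex belong to three distinct edges.
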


\section{Proof of Theorem~\ref{Th:4-8-flow}}
In this section, we provide a proof for
Theorem~\ref{Th:4-8-flow}. At first, we prove a special case of
the theorem for cubic graphs, which states that every
flow-admissible 3-edge-colorable cubic signed graph admits a
nowhere-zero 8-flow. This theorem is an improvement on Theorem
1.1 of \cite{LLLZZ} (see Statement (1) of Theorem 1.1 in this
paper), which only guarantees a 10-flow under the same
assumptions.  Nevertheless, the proof presented in \cite{LLLZZ}
provides a nowhere-zero 8-flow except in one case, which
requires a 10-flow. We construct a nowhere-zero 8-flow in the
missing case, so there is a nowhere-zero $8$-flow in all cases.
For the sake of completeness, we include the proof of the cases
where a nowhere-zero  $8$-flow was previously known.

\begin{theorem}\label{Th:8-flow-cubic}
Every flow-admissible signed graph whose underlying graph is
cubic and $3$-edge-colorable admits a nowhere-zero $8$-flow.
\end{theorem}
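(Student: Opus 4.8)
The plan is to realize the nowhere-zero $8$-flow as a combination $2f_2+f_1$, where $f_2$ is a signed $4$-flow and $f_1$ a signed $2$-flow expressed in a common orientation $\tau$, chosen so that $\supp(f_2)\cup\supp(f_1)=E(G)$. This immediately gives what we want: on any edge $e$ one has $|2f_2(e)+f_1(e)|\le 2\cdot 3+1=7<8$, and $2f_2(e)+f_1(e)=0$ forces $f_2(e)=0$ (a nonzero even integer cannot be cancelled by a value in $\{-1,0,1\}$) and hence $f_1(e)=0$; thus $\supp(2f_2+f_1)=\supp(f_2)\cup\supp(f_1)=E(G)$. So the whole problem reduces to producing such a pair of flows.

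First I would dispose of the balanced case. If $(G,\sigma)$ is balanced we switch it to an all-positive signed graph, and since $G$ is cubic and $3$-edge-colorable it carries a nowhere-zero $4$-flow by Lemma~\ref{LE:Tutte-4-flow}, which is in particular an $8$-flow. Hence I may assume that $(G,\sigma)$ is connected and unbalanced. I then fix a $3$-edge-coloring (equivalently, via Lemma~\ref{LE:Tutte-4-flow}, a nowhere-zero $4$-flow of the underlying graph) and a fixed orientation $\tau$ in which all flows below are written.

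The heart of the construction is the $4$-flow $f_2$, whose support $S_2$ must absorb all of the unbalance so that the leftover $D:=E(G)\setminus S_2$ is balanced. By Proposition~\ref{Le:Flow-amissible}, every edge lying on an unbalanced circuit is contained in a barbell, so the unbalanced circuits can be organized into barbell configurations that support honest signed flows; the basic building block is Lemma~\ref{HC-cover}, which, for an unbalanced circuit $C$ with $G-E(C)$ balanced, returns a $4$-flow with $E(C)\subseteq\supp$. Using Lemma~\ref{Le:contraction} to peel off balanced $2$-edge-connected pieces and reassembling along the barbell paths, I would obtain a signed $4$-flow $f_2$ whose support contains all unbalanced circuits, so that $D$ is balanced; after switching $D$ to be all-positive its negative-edge parity is automatically even.

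If every component of $D$ is eulerian, then Lemma~\ref{eulerian-2-flow} supplies a nowhere-zero $2$-flow $f_1$ supported exactly on $D$, and $2f_2+f_1$ is the desired $8$-flow. The main obstacle is the remaining case, in which $D$ has odd-degree vertices and admits no $2$-flow; this is precisely the configuration in which the argument of Li et al.\ \cite{LLLZZ} produces only a $10$-flow. Here I would treat the parity vector of $D$ as a $\Z_2$-flow supported on $D$, and when it has an even number of odd-size components Lemma~\ref{TH: 2-to-3} upgrades it to a $3$-flow $f_3$ equal to $\pm1$ exactly on $D$ and to $0$ or $\pm2$ on $S_2$. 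The difficulty is that the naive combination $2f_2+f_3$ can attain absolute value $8$ and can vanish on edges where $2f_2=-f_3$, which is exactly the loss that forces a $10$-flow in \cite{LLLZZ}. The new ingredient I would supply is a local rerouting of $f_2$ along a barbell through the relevant unbalanced circuit that destroys every coincidence $2f_2=-f_3$ while keeping all resulting values at most $7$; simultaneously maintaining flow conservation and the value bound during this rerouting is the crux of the proof and is what sharpens the previous $10$-flow to an $8$-flow.
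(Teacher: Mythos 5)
Your proposal has the right general shape---the paper's own proof also produces the $8$-flow as a combination of a small flow and a doubled larger flow with complementary supports (e.g.\ $f_3+2f_4$)---but two essential steps are missing, and one of them cannot be repaired as stated. First, the existence of a signed $4$-flow $f_2$ whose support contains \emph{all} unbalanced circuits, so that $D=E(G)\setminus\supp(f_2)$ is balanced, is asserted but never proved. Lemma~\ref{HC-cover} only applies when there is a single unbalanced circuit $C$ with $G-E(C)$ balanced; in general the unbalance of $(G,\sigma)$ is not concentrated on one circuit, and ``peeling off balanced pieces and reassembling along barbell paths'' is not an argument. The paper avoids this problem entirely: it never takes $\supp(f_2)$ to be an abstractly constructed set, but works with the three $2$-factors $RB$, $RY$, $BY$ of the $3$-edge-coloring, controls the parity of negative edges in them by pigeonhole and by swapping colors along unbalanced circuits, and invokes Lemma~\ref{HC-cover} only in the single subcase where $RY$ has exactly one unbalanced circuit $C_1$ and $G-E(C_1)$ is in fact balanced---which is precisely the hypothesis of that lemma.

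Second, your fallback when $D$ fails to be eulerian is invalid: the support of any $\Z_2$-flow on a (signed) graph is necessarily an even subgraph, since over $\Z_2$ the conservation condition at a vertex $v$ says that the number of support edges at $v$ is even. So if $D$ has odd-degree vertices there is \emph{no} $\Z_2$-flow with support $D$, and Lemma~\ref{TH: 2-to-3} cannot be applied to it at all; this is exactly why the paper arranges from the outset that the supports it feeds into Lemma~\ref{TH: 2-to-3} are $2$-factors. Finally, even granting that step, you explicitly defer the decisive point---the ``local rerouting'' of $f_2$ that is supposed to eliminate the cancellations $2f_2=-f_3$ and the value-$8$ edges---calling it the crux without supplying it. A proof attempt that names its own missing core is not a proof; the sharpening from $10$ to $8$ in the paper is achieved not by any rerouting but by the case analysis on the parities of the color classes, with the combinations $f_1+3f_2$, $3f_3+f_2$, $f_3+2f_4$, and $f_3+2f_6+4f_7$ in the respective cases.
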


\begin{proof}
Let $(G,\sigma)$ be a $3$-edge-colorable cubic signed graph and
let $\tau$ be an orientation of $(G,\sigma)$. We fix $\tau$
during the entire proof and define all flows with respect to
this orientation or its restriction to a subgraph under
consideration.

Let $R$, $B$, and $Y$ be the color classes of a 3-edge-coloring
of $G$. For any pair $M_1, M_2\in\{R, B, Y\}$ let $M_1M_2$
denote the $2$-factor induced by $M_1\cup M_2$. By the
pigeon-hole principle, two of the color classes, say $R$ and
$B$, must have the same parity of the number of negative edges.
It follows that $RB$ has an even number of odd components.
There are two main cases to consider.

\medskip \noindent
{\bf Case 1.} $RB$ contains an unbalanced circuit.

By Lemma~\ref{TH: 2-to-3},  $(G,\sigma)$ has a $3$-flow $(\tau,
f_1)$ such that $RB = E_{f_1 = \pm 1}$ and $|f_1(e)| = 2$ only
if $e \in Y$.

\medskip \noindent
{\bf Subcase 1.1.} All three color classes have the same parity
of the number of negative edges.

In this case, $RY$ has an even number of unbalanced circuits. By
Lemma~\ref{TH: 2-to-3}  again,  $(G,\sigma)$ has a $3$-flow
$(\tau, f_2)$ such that $RY = E_{f_2 = \pm 1}$ and $|f_2(e)| =
2$ only if $e \in B$. Then $f = f_1 + 3f_2$  is a nowhere-zero
$9$-flow on $(G,\sigma)$. However,  $E_{f_2= \pm 2} \cap E_{f_1
= \pm 2} = \emptyset$, so $E_{f= \pm 8} = |f(e)|  = \emptyset$.
Thus $f$ is indeed a nowhere-zero $8$-flow on $(G,\sigma)$.

\medskip \noindent
{\bf Subcase 1.2.} $RY$ or $BY$ has an odd number of unbalanced
circuits.

The assumption implies that both $RY$ and $BY$ have an odd
number of unbalanced circuits. Choose an unbalanced circuit $C$
from $RB$ and set $R'=R\bigtriangleup C$ and
$B'=B\bigtriangleup C$; note that this is equivalent to
swapping colors $R$ and $B$ on $C$. Then $R'$, $B'$, and $Y$
determine a $3$-edge-coloring of $G$ such that all three color
classes have the same parity of the number of negative edges.
We are back to Subcase 1.1.

\medskip \noindent
{\bf Case 2.} $RB$ contains  no  unbalanced circuit.

By Lemma~\ref{eulerian-2-flow},  $(G,\sigma)$ has a $2$-flow
$f_3$ such that $\supp(f_3) = RB$.

\medskip \noindent
{\bf Subcase 2.1.} The  number of unbalanced circuits in $RY$
is even.

If we take  the $3$-flow $f_2$ defined in Subcase 1.1, then
$\supp(f_2) \cup \supp(f_3) = E(G)$. Thus $3f_3 + f_2$ is a
nowhere-zero $6$-flow on $(G,\sigma)$.

\medskip\noindent
{\bf Subcase 2.2.} There is only one unbalanced circuit in
$RY$.

Let $\mathcal{C}=\{C_1, \dots, C_t \}$ be the set of components
of $RY$, where $C_1$ is unbalanced and $C_2,\ldots,\penalty0
C_t$ are all balanced. We may assume that the balanced
components of $RY$ are all-positive.
Let $H$ be the signed graph obtained from $(G,\sigma)$ by
contracting $\mathcal{C} - \{C_1\}$. By
Lemma~\ref{Le:contraction}, $H$ is flow-admissible.

The vertex set of $H$ can be partitioned into two sets $K$ and
$\overline{K}$, where $K=V(C_1)$ and $\overline{K}$ is the set
of vertices that correspond to the balanced circuits of
$\mathcal{C}$. For $u\in \overline{K}$, let $C_u$ denote the
corresponding circuit of $\mathcal{C}$. Note that $C_1$ is an
unbalanced circuit of $H$.

We consider the following two subcases.

\medskip \noindent
{\bf Subcase 2.2.1.} $H$  contains an unbalanced circuit $C'$
that is edge-disjoint from $C_1$.

Since $G$ is cubic,  $C'$ is vertex-disjoint from $C_1$. Thus
there is a long barbell $Q$ in $H$ which consists of $C_1$ and
$C'$ and a path $P$ connecting them. Let $\tau_1$ be the
restriction of the orientation $\tau$ to $H$.
By Lemma~\ref{TH: 2-to-3}, there is a nowhere-zero $3$-flow
$(\tau_1, f'')$ on $Q$. Since $d_Q(u)=2$ or $d_Q(u)=3$ for
every $u\in V(Q) - V(C_1)$, the vertex $u$ corresponds to an
all-positive circuit $C_u$ in $(G,\sigma)$ with $|\delta_Q
(V(C_u))|=2$ or $3$. Hence, by Lemma~\ref{LE: flow-extension},
we can extend $f''$ to a 4-flow $f'$  on $(G,\sigma)$ with
$\supp(f')$ containing $\bigcup_{u\in V(Q)}E(C_u)\cup E(C_1)$.
Since  for each $u\in V(H) - V(Q)$, the circuit $C_u$ is
balanced, so $(G,\sigma)$ admits a  $2$-flow $\phi_u$ with
$E(C_u)=\supp(\phi_u)$. Thus $f_4=f'+\sum_{u\in V(H) -
V(Q)}\phi_u$ is a $4$-flow on $(G,\sigma)$ with $RY\subseteq
\supp(f_4)$. Therefore,  $f_3+ 2 f_4$ is a nowhere-zero
$8$-flow on $(G,\sigma)$.

\medskip \noindent
{\bf Subcase 2.2.2.} $H$  contains no unbalanced circuit  that
is edge-disjoint from $C_1$.

In this case, $H-E(C_1)$ is balanced and thus $G-E(C_1)$ is
balanced. By using switching operations, if necessary, we may
achieve that all negative edges of $(G, \sigma)$ occur in $C_1$
only. By Lemma~\ref{HC-cover},  $(G,\sigma)$ has  a $4$-flow
$f''$ such that $C_1 \subseteq \supp(f'')$ and every vertex in
$\supp(f'')-E(C_1)$ has degree at most $3$ in $H$. By
Lemma~\ref{LE: flow-extension}, we can extend $f''$ to a
$4$-flow  $f_5$  on $(G,\sigma)$ with $RY\subseteq \supp(f_5)$
in $(G, \sigma)$. Therefore,  $f_3+2f_5$ is a nowhere-zero
$8$-flow on $(G,\sigma)$.

\medskip \noindent
{\bf Subcase 2.3.} The  number of unbalanced circuits in $RY$
is odd and is at least $3$.

Let $C_1$ and $C_2$ be two unbalanced circuits in $RY$. Set
$R'=R \bigtriangleup C_1$ and $Y' = Y \bigtriangleup C_1$. Then
$R'$, $Y'$, and $B$ determine a  $3$-edge-coloring of $G$ such
that $B$ and $Y'$ have the same parity of the number of
negative edges. If $BY'$ contains an unbalanced circuit, then
we are back to Case 1.  Thus we may assume that all circuits in
$BY'$ are balanced. Let $f_6$ be a $2$-flow of $(G,\sigma)$
such that $\supp(f_6) = BY'$. Similarly, let $R'' = R
\bigtriangleup C_2$ and $Y'' = Y \bigtriangleup C_2$. Then
$R''$, $Y''$, and $B$ determine a $3$-edge-coloring of $G$
where $B$ and $Y''$ have the same parity of the number of
negative edges. As above, we may also assume that $BY''$
contains no unbalanced circuits. Let $f_7$ be a $2$-flow of
$(G,\sigma)$ such that $\supp(f_6) = BY''$. Then $f_3 +
2f_6+4f_7$ is a nowhere-zero $8$-flow on $(G,\sigma)$. This
completes the proof of Theorem~\ref{Th:8-flow-cubic}.
 \end{proof}

Now we are ready to prove Theorem~\ref{Th:4-8-flow}.

\medskip \noindent
{\bf Proof of Theorem~\ref{Th:4-8-flow}.} Let $(G,\sigma)$ be a
flow-admissible signed graph such that $G$ admits a
nowhere-zero $4$-flow. If $G$ is cubic, the conclusion of
Theorem~\ref{Th:4-8-flow} follows from
Theorem~\ref{Th:8-flow-cubic}. Assume that $G$ is not cubic. We
may assume that $G$ has no vertices of degree $2$ and contains
no positive loops, because any nowhere-zero flow on such a
signed graph easily extends to a nowhere-zero flow on a
subdivision of $G$ with possibly some positive loops added.

Since $G$ has a nowhere-zero $4$-flow, it has a nowhere-zero
$\Z_2\times \Z_2$-flow by Lemma~\ref{LE:Tutte-4-flow}; let us
denote this flow by $f$. Consider an arbitrary vertex $v$ of
degree $k\geq 4$. We intend to construct a new signed graph
$(G_1,\sigma_1)$, with fewer vertices of degree greater than
$3,$ by replacing $v$ with a suitable connected balanced
subgraph $H_v\subseteq G$. At the same time, we construct a
nowhere-zero $\Z_2\times\Z_2$-flow $g$ on $G_1$ such that the
contraction of $H_v$ to $v$ transforms $g$ to $f$.

In $G$, let $a$, $b$, and $c$ be the numbers of edges incident
with $v$ that carry flow values $(0,1)$, $(1,0)$, and $(1,1)$,
respectively, each negative loop being counted twice. Order the
half-edges with ends at $v$ as $h_1,h_2, \dots, h_k$ in such a
way that the flow values of the edges containing $h_1, h_2,
\dots, h_a$ are $(0,1)$, the flow values of the edges
containing $h_{a+1}, \dots, h_{a+b}$ are $(1,0)$, and the flow
values of the remaining $c$ edges incident with $v$
are~$(1,1)$. Since $f$ is a nowhere-zero $\mathbb{Z}_2\times
\mathbb{Z}_2$-flow, we have $a.(0,1) + b.(1,0) +
c.(1,1)=(0,0)$. Hence $a+c\equiv 0 \equiv b+c\pmod2$ and
therefore $a\equiv b\equiv c\pmod2$. In other words, $a$, $b$,
and $c$  are either all odd or all even.

We proceed to the construction of $(G_1,\sigma_1)$ and a
nowhere-zero $\mathbb{Z}_2\times\mathbb{Z}_2$-flow $g$ on its
underlying graph $G_1$. The construction depends on the
values of $a$, $b$, and $c$.

\medskip \noindent
{\bf Case 1.}   The numbers $a$, $b$, and $c$ are all odd, or
they are all even and at least one of them is zero; see
Figure~\ref{FIG:case 1}  for an illustration.

We blow up $v$ to an all-positive circuit $C_v = v_1v_2\cdots
v_kv_1$ such that each $v_i$ is the end of~$h_i$. We define a
nowhere-zero $\mathbb{Z}_2\times \mathbb{Z}_2$-flow $g$ on
$G_1$ as follows.
\begin{itemize}
\item[(i)]  Set $g(e) = f(e)$ for each edge $e$ of
    $G_1-E(C_v)$;
\item[(ii)]  If $a$, $b$, and $c$ are all odd,  then for
    each edge $v_iv_{i+1} \in E(C_v)$,  set
\[
g(v_iv_{i+1}) =
  \left\{
    \begin{array}{cl}
      (1,1)  & \mbox{if $i = 1, 3, \dots, a+b-1$,} \\
     (1,0) &  \mbox{if  $i = 2, 4, \dots, a-1,  a+b+1, a+b+3, \dots, k-2, k$, }\\
      (0,1) & \mbox{if $i = a+1, a+3, \dots, k-1$.}
    \end{array}
\right.
\]

\item[(iii)] If $a$, $b$, and $c$ are all even and at least
    one is zero, by symmetry we may assume that $c = 0$.

If $a\not = 0$ and $b\not = 0$, then for each edge
$v_iv_{i+1} \in E(C_v)$ set
   \[
g(v_iv_{i+1}) =
  \left\{
    \begin{array}{cl}
      (1,1)  & \mbox{if $i$ is even,} \\
     (1,0) &  \mbox{if  $i = 1,3,\dots, a-1$,}\\
      (0,1) & \mbox{if $i = a+1, a+3, \dots, k-1$.}
    \end{array}
\right.
\]
If either $a = 0$ or $b=0$, without loss of generality we
may assume that $b = 0$. Then for each edge $v_iv_{i+1} \in
E(C_v)$ set
        \[
g(v_iv_{i+1}) =
  \left\{
    \begin{array}{cl}
      (1,1)  & \mbox{if $i$ is even,} \\
     (1,0) &  \mbox{if  $i$ is odd.}
    \end{array}
\right.
\]
\end{itemize}


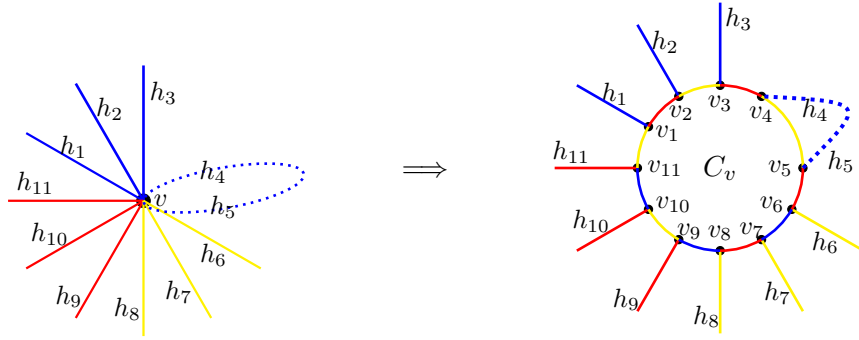
\begin{figure}
\begin{center}
\begin{tikzpicture}[scale=1.8]
\path (0:0)   coordinate (v);  \draw [fill=black] (v) circle (0.05cm);

\path (90:1) coordinate (v1);

\path (120:1) coordinate (v2);

\path (150:1) coordinate (v3);

\path (180:1) coordinate (v4);

\path (210:1) coordinate (v5);

\path (240:1) coordinate (v6);

\path (270:1) coordinate (v7);

\path (300:1) coordinate (v8);

\path (330:1) coordinate (v9);

\draw [line width=1, blue] (v)--(v1); \draw [line width=1,
blue] (v2)--(v); \draw [line width=0.85, blue] (v3)--(v);

\draw [line width=0.85, red] (v4)--(v); \draw [line
width=0.85,red] (v5)--(v);

\draw [line width=0.85,red] (v6)--(v); \draw [line
width=0.85,yellow] (v7)--(v);

\draw [line width=0.85,yellow] (v8)--(v); \draw [line
width=0.85,yellow] (v9)--(v);

\draw[dotted,line width=1,rotate=100, blue] (-0.01,-0.6)
ellipse (0.15 and 0.6);

\node[right]at (15:0) {\small $v$};

\node[right]at (150:0.8) {\small $h_1$}; \node[right]at
(122:0.8) {\small $h_2$}; \node[right]at (92:0.8) {\small
$h_3$}; \node[right]at (32:0.4) {\small $h_4$}; \node[below]at
(11:0.6) {\small $h_5$}; \node[left]at (-30:0.8) {\small
$h_6$}; \node[left]at (-57:0.8) {\small $h_7$}; \node[left]at
(-86:0.8) {\small $h_8$}; \node[left]at (-118:0.8) {\small
$h_9$}; \node[above]at (-150:0.8) {\small $h_{10}$};
\node[above]at (-178:0.8) {\small $h_{11}$};
\end{tikzpicture}\hspace{1cm}
\begin{tikzpicture}
\path (0:0)   coordinate (v); \node[above]at (0,2)
{$\Longrightarrow$};
\end{tikzpicture}\hspace{1cm}
\begin{tikzpicture}[scale=1.1]
\path (0:0)   coordinate (v);

\path (0:1) coordinate (u1); \draw [fill=black] (u1) circle
(0.05cm);

\path (60:1) coordinate (u2); \draw [fill=black] (u2) circle
(0.05cm);

\path (90:1) coordinate (u3); \draw [fill=black] (u3) circle
(0.05cm);

\path (120:1) coordinate (u4); \draw [fill=black] (u4) circle
(0.05cm);

\path (150:1) coordinate (u5); \draw [fill=black] (u5) circle
(0.05cm);

\path (180:1) coordinate (u6); \draw [fill=black] (u6) circle
(0.05cm);

\path (210:1) coordinate (u7); \draw [fill=black] (u7) circle
(0.05cm);

\path (240:1) coordinate (u8); \draw [fill=black] (u8) circle
(0.05cm);

\path (270:1) coordinate (u9); \draw [fill=black] (u9) circle
(0.05cm);

\path (300:1) coordinate (u10); \draw [fill=black] (u10) circle
(0.05cm);

\path (330:1) coordinate (u11); \draw [fill=black] (u11) circle
(0.05cm);

\path (30:2) coordinate (v1);

\path (60:2) coordinate (v2);

\path (90:2) coordinate (v3);

\path (120:2) coordinate (v4);

\path (150:2) coordinate (v5);

\path (180:2) coordinate (v6);

\path (210:2) coordinate (v7);

\path (240:2) coordinate (v8);

\path (270:2) coordinate (v9);

\path (300:2) coordinate (v10);

\path (330:2) coordinate (v11);

\node at (0:0) {$C_v$};

\node[right]at (150:1.8) {\small $h_1$}; \node[right]at
(122:1.8) {\small $h_2$}; \node[right]at (92:1.8) {\small
$h_3$}; \node[below]at (40: 1.5) {\small $h_4$}; \node[below]at
(13:1.5) {\small $h_5$}; \node[left]at (-30:1.8) {\small
$h_6$}; \node[left]at (-57:1.8) {\small $h_7$}; \node[left]at
(-86:1.8) {\small $h_8$}; \node[left]at (-118:1.8) {\small
$h_9$}; \node[above]at (-150:1.8) {\small $h_{10}$};
\node[above]at (-178:1.8) {\small $h_{11}$};

\node[right]at (155:1) {\small $v_1$}; \node[below]at (118:1)
{\small $v_2$}; \node[below]at (90:1) {\small $v_3$};
\node[below]at (60: 1) {\small $v_4$}; \node[left]at (0:1)
{\small $v_5$}; \node[left]at (-26:1) {\small $v_6$};
\node[left]at (-49:1) {\small $v_7$}; \node[above]at (-90:1)
{\small $v_8$}; \node[right]at (-130:1) {\small $v_9$};
\node[right]at (-154:1) {\small $v_{10}$}; \node[right]at
(-180:1) {\small $v_{11}$};

\draw [line width=1, blue] (u3)--(v3); \draw [line
width=1,blue] (u4)--(v4); \draw [line width=1,blue] (u5)--(v5);
\draw [line width=1, red] (u6)--(v6); \draw [line width=1,red]
(u7)--(v7); \draw [line width=1, red] (u8)--(v8); \draw [line
width=1,yellow] (u9)--(v9); \draw [line width=1, yellow]
(u10)--(v10); \draw [line width=1, yellow] (u11)--(v11);

\draw [line width=1,yellow]  (u3) arc(90:120:1); \draw [line
width=1,red]  (u3) arc(90:60:1);

\draw [line width=1,red]  (u4) arc(120:150:1); \draw [line
width=1,yellow]  (u5) arc(150:180:1);

\draw [line width=1,blue]  (u6) arc(180:210:1);

\draw [line width=1,yellow]  (u7) arc(210:240:1);

\draw [line width=1,blue]  (u8) arc(240:270:1);

\draw [line width=1,red]  (u9) arc(270:300:1);

\draw [line width=1,blue]  (u10) arc(300:330:1);

\draw [line width=1,red]  (u11) arc(330:360:1); \draw [line
width=1,yellow]  (u1) arc(0:60:1);

\draw [dotted,line width=1.5, blue] (u1) .. controls
(1.8,0.8)..(u2);
\end{tikzpicture}
\end{center}
\caption{$a =5$, $b = c = 3$. The flow values of blues edges,
red edges and yellow edges are $(0,1), (1,0), (1,1)$,
respectively. } \label{FIG:case 1}
\end{figure}


\medskip \noindent
{\bf  Case 2.}  All of  $a$, $b$, and $c$ are nonzero and even;
see Figure~\ref{FIG:case 2}  for an illustration.

We blow up  $v$ into an all-positive circuit $C_v =
v_1v_2\cdots v_kv_1$ such that each $v_i$ is the end of $h_i$
for each $i = 1, \dots, a-1$ and each $i = a+b+1, \dots, k$,
$v_{i}$ is the end of $h_{i+1}$ for each $i =a, \dots, a+b -1$,
and $v_{a+b}$ is the end of $h_a$. We now define a nowhere-zero
$\mathbb{Z}_2\times \mathbb{Z}_2$-flow $g$ of $G_1$ as follows.
\begin{itemize}
\item[(iv)]  Set  $g(e) = f(e)$ for each edge $e$ of
    $G_1-E(C_v)$.

\item[(v)]  For each edge $v_iv_{i+1} \in E(C_v)$, set
  \[
g(v_iv_{i+1}) =
  \left\{
    \begin{array}{cl}
    (1,1)  & \mbox{if $i = 1, 3, \dots, a+b-1$,} \\
    (1,0)  & \mbox{if $i =  2, 4, \dots, a-2, a+b, a+b +2, \dots, k$,}\\
    (0,1)  & \mbox{if $i =a, a+2, \dots, a+b-2, a+b+1, a+b+3, \dots, k-1$.}
    \end{array}
\right.
\]
\end{itemize}

It is easy to check that $g$ is a nowhere-zero
$\mathbb{Z}_2\times\mathbb{Z}_2$-flow on $G_1$ in each case.


\begin{figure}
\begin{center}
\begin{tikzpicture}[scale=1.8]
\path (0:0)   coordinate (v);  \draw [fill=black] (v) circle (0.05cm);


\path (0:1) coordinate (v1);



\path (45:1) coordinate (v2);

\path (90:1) coordinate (v3);


\path (135:1) coordinate (v4);


\path (180:1) coordinate (v5);


\path (225:1) coordinate (v6);


\path (270:1) coordinate (v7);


\path (315:1) coordinate (v8);






\draw [line width=1, blue] (v)--(v1); \draw [line width=1,
blue] (v2)--(v); \draw [line width=0.85, blue] (v3)--(v);

\draw [line width=0.85,red] (v4)--(v); \draw [line
width=0.85,red] (v5)--(v);

\draw [line width=0.85,yellow] (v6)--(v); \draw [line
width=0.85,yellow] (v7)--(v);

\draw [line width=0.85,blue] (v8)--(v);



\node[above]at (40:0.1) {$v$}; \node[right] at (135:0.8)
{\small $h_8$}; \node[right] at (90:0.8) {\small $h_1$};
\node[right] at (45:0.8) {\small $h_2$}; \node[below] at
(0:0.8) {\small $h_3$}; \node[left] at (-45:0.8) {\small
$h_4$}; \node[left] at (-90:0.8) {\small $h_5$}; \node[left] at
(-135:0.8) {\small $h_6$}; \node[above] at (-180:0.8) {\small
$h_7$};

\end{tikzpicture}\hspace{1cm}
\begin{tikzpicture}
\path (0:0)   coordinate (v); \node[above]at (0,2)
{$\Longrightarrow$};
\end{tikzpicture}\hspace{1cm}
\begin{tikzpicture}[scale=1.1]

\path (0:1) coordinate (u1);\draw [fill=black] (u1) circle
(0.05cm);



\path (45:1) coordinate (u2); \draw [fill=black] (u2) circle
(0.05cm);

\path (90:1) coordinate (u3); \draw [fill=black] (u3) circle
(0.05cm);

\path (135:1) coordinate (u4); \draw [fill=black] (u4) circle
(0.05cm);

\path (180:1) coordinate (u5); \draw [fill=black] (u5) circle
(0.05cm);

\path (225:1) coordinate (u6); \draw [fill=black] (u6) circle
(0.05cm);

\path (270:1) coordinate (u7); \draw [fill=black] (u7) circle
(0.05cm);

\path (315:1) coordinate (u8); \draw [fill=black] (u8) circle
(0.05cm);




\path (0:2) coordinate (v1);


\path (45:2) coordinate (v2);

\path (90:2) coordinate (v3);

\path (135:2) coordinate (v4);

\path (180:2) coordinate (v5);

\path (225:2) coordinate (v6);

\path (270:2) coordinate (v7);

\path (315:2) coordinate (v8);




\path (-90:1) coordinate (w);






\draw [line width=1,red]  (u1) arc(0:-45:1); \draw [line
width=1,yellow]  (u1) arc(0:45:1); \draw [line width=1,red]
(u2) arc(45:90:1); \draw [line width=1,yellow]  (u3)
arc(90:135:1); \draw [line width=1,blue]  (u4) arc(135:180:1);
\draw [line width=1,yellow]  (u5) arc(180:225:1); \draw [line
width=1,red]  (u6) arc(225:270:1); \draw [line width=1,blue]
(u7) arc(270:315:1);


\node at (0:0) {$C_v$}; \node[below] at (90:1) {\small $v_1$};
\node[left] at (40:1) {\small $v_2$}; \node[left] at (0:1)
{\small $v_3$}; \node[left] at (-38:1) {\small $v_4$};
\node[above] at (-90:1) {\small $v_5$}; \node[right] at
(-138:1) {\small $v_6$}; \node[right] at (-180:1) {\small
$v_7$}; \node[right] at (-218:1) {\small $v_8$};

\node[right] at (90:1.8) {\small $h_1$}; \node[below] at
(40:1.8) {\small $h_2$}; \node[below] at (0:1.8) {\small
$h_3$}; \node[left] at (-42:1.8) {\small $h_5$}; \node[left] at
(-90:1.8) {\small $h_6$}; \node[above] at (-138:1.8) {\small
$h_4$}; \node[above] at (-180:1.8) {\small $h_7$}; \node[right]
at (-228:1.8) {\small $h_8$};

\draw [line width=0.85, blue] (u1)--(v1); \draw [line
width=0.85, blue] (u2)--(v2); \draw [line width=1, blue]
(u3)--(v3); \draw [line width=1,red] (u4)--(v4); \draw [line
width=1,red] (u5)--(v5); \draw [line width=1, blue] (u6)--(v6);
\draw [line width=1,yellow] (u7)--(v7); \draw [line width=1,
yellow] (u8)--(v8);

\end{tikzpicture}
\end{center}
\caption{$a =4$, $b = c = 2$,  The flow values of blues edges,
red edges and yellow edges are $(0,1), (1,1), (1,0)$,
respectively.} \label{FIG:case 2}
\end{figure}
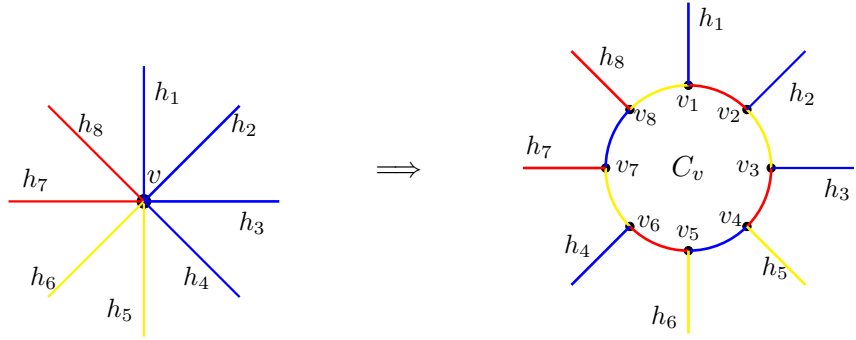

\medskip

We finish the proof. We have constructed a signed graph
$(G_1,\sigma_1)$, with a balanced subgraph $H_v\subseteq G_1$
such that $(G_1/H_v,\sigma_1/H_v)=(G,\sigma)$, and a
nowhere-zero $\mathbb{Z}_2\times\mathbb{Z}_2$-flow $g$ on
$G_1$. If $G_1$ is not cubic, we pick a vertex $v_1$ of degree
greater than $3$ and process it similarly as we did $v$ in $G$.
Eventually, after a finite number of steps, we obtain a cubic
signed graph $(G',\sigma')$, with a balanced subgraph
$H'\subseteq G'$ such that $(G'/H',\sigma'/H')=(G,\sigma)$, and
a nowhere-zero $\mathbb{Z}_2\times\mathbb{Z}_2$-flow $g'$ on
$G'$. By Lemma~\ref{LE:Tutte-4-flow}, $G'$ is
$3$-edge-colorable. Since $(G,\sigma)$ is flow-admissible, so
is $(G',\sigma')$ according to Lemma~\ref{Le:contraction}. Now
we can apply Theorem~\ref{Th:8-flow-cubic} and conclude that
$(G',\sigma')$ admits a nowhere-zero $8$-flow. After
contracting each component of $H'$ to a vertex, we finally
obtain a nowhere-zero $8$-flow on $(G,\sigma)$, as required.
\hfill$\Box$

\section*{Acknowledgements}
The first author was partially supported by a grant from 
Simons Foundation (No. 839830). The second author was partially 
supported by by the grant VEGA~1/0743/21 of the Slovak Ministry 
of Education. The third author was partially supported by 
the grant VEGA~1/0727/22 of Slovak Ministry of Education. 
Both the second and the third author were partially supported 
by the grant No.~APVV-19-0308 of the Slovak Research and 
Development Agency. 


 \end{document}